\newtheorem{thm}{Theorem}[section]
\newtheorem{dfn}[thm]{Definition}
\newcommand{\AND}{\wedge}
\newcommand{\OR}{\vee}
\newcommand{\IMP}{\rightarrow}
\newcommand{\IFF}{\leftrightarrow}
\newcommand{\logicname}[1]{{\sf#1}\xspace}
\newcommand{\f}{\logicname{F}}
\newcommand{\bpc}{\logicname{BPC}}
\newcommand{\ipc}{\logicname{IPC}}
\newcommand{\wf}{\logicname{WF}}
\newcommand{\propset}{\mathsf{Prop}}
\newcommand{\intrule}[1]{#1{I}}
\newcommand{\elrule}[1]{#1{E}}
\begin{document}

\title{Natural Deduction  systems for some weak Subintuitionistic Logics}

\author{\textbf{Fatemeh Shirmohammadzadeh Maleki}\\
Department of Logic, Iranian Institute of Philosophy,\\
 Arakelian 4, Vali-e-Asr, Tehran, Iran,
f.shmaleki2012@yahoo.com}
\maketitle
\date{}
\begin{abstract}
The paper is devoted to the introduction of natural deduction systems for some weak subintuitionistic logics, along with proofs of normalization theorems for these systems.
\end{abstract}

\textbf{Keywords:} Subintuitionistic logics, Natural deduction, Hilbert-style, Normalization.

\section{introduction}

The study of subintuitionistic logics began with the work of G. Corsi~\cite{a4}, who introduced a basic system, \f, using a Hilbert-style proof framework. The system \f is characterized by Kripke frames that impose no conditions on the accessibility relation. Corsi also demonstrated that \f can be translated into the modal logic {\sf K}, similar to how \ipc translates into {\sf S4}. G. Restall~\cite{a2} defined a comparable system {\sf SJ}, (see also~\cite{Dic}).

A widely studied extension of \f was introduced by A. Visser, known as Basic Logic \bpc, formulated in natural deduction style. Visser established the completeness of \bpc for finite, irreflexive Kripke models. Substantial research on this logic has since been conducted by M. Ardeshir and W. Ruitenburg (see, for instance, \cite{Ar0, Ar1, Ar}).

D. de Jongh and F. Shirmohammadzadeh Maleki have introduced weaker subintuitionistic logics based on neighborhood semantics~\cite{Dic3, Dic4, FD}. Their work specifically emphasizes the system \wf, which is considerably weaker than \f, along with its extensions. These systems possess modal counterparts, establishing connections between subintuitionistic and modal logics~\cite{Dic4, FD6, Dic5}.

Sequent calculi for \f are explored in \cite{kik, ish, yam, ab, Tesi}. In \cite{Fatemeh}, F. Shirmohammadzadeh Maleki introduced and analyzed sequent calculi for specific weak subintuitionistic logics, including \wf and 
\f, as well as intermediate logics. To date, however, no natural deduction system has been provided for the logics positioned between \wf and \f. This paper aims to bridge this gap.

In this paper, we introduce natural deduction systems for the subintuitionistic logics \wf and \f, as well as for certain logics that lie between these two systems.

The structure of the paper is as follows: Section~\ref{modneigh} provides an overview of Hilbert-style proof systems for selected subintuitionistic logics. In Section~\ref{binneigh}, we introduce natural deduction systems for each of the subintuitionistic logics: \wf, ${\sf WF_N}$, ${\sf WF_{N_{2}}}$, ${\sf WF}{\sf \widehat{C}}$, ${\sf WF}{\sf \widehat{D}}$, ${\sf WFI}$, ${\sf WFC}$, ${\sf WFD}$, and \f. We also demonstrate the equivalence between these natural deduction systems and their corresponding Hilbert systems. In Section~\ref{binneighh}, we discuss the normalization process for the natural deduction systems of \wf and 
\f, along with some intermediate logics.

\section{Preliminaries}\label{modneigh}
In this subsection, we review the Hilbert-style system for the basic subintuitionistic logic {\sf WF} and some of its extensions. The results presented here were previously established in \cite{FD, Dic4}.

The language of subintuitionistic logics is built from a countable set of atomic propositions, denoted by lowercase letters 
$p, q, \dots$ from the set $\propset$, using the connectives $ \vee, \wedge, \rightarrow $ and the propositional constant $ \bot $. We use uppercase latin letters $A, B, C, \ldots$ to represent formulas in this language. The symbol $ \leftrightarrow  $ is defined as an abbreviation by $ A\leftrightarrow B \equiv (A\rightarrow B)\wedge (B\rightarrow A) $ 

\begin{figure}[t]
	\begin{center}
		
	\begin{tabular}{c l @{\hspace{1cm}} cl }
		1. & $ A \IMP (A \OR B )$ & 8. & $A \IMP A$ \\
		2. & $ B \IMP (A \OR B )$ & 9. &  $\vliinf{}{}{A\IMP C}{A \IMP B}{B \IMP C}$\\
		3. & $ (A \AND B) \IMP A$ & 10. & $\vliinf{}{}{A \IMP (B \AND C)}{A \IMP B }{A \IMP C}$\\
		4. & $ (A \AND B) \IMP B$ & 11.  & $\vliinf{}{}{(A \OR B ) \IMP C}{A \IMP C}{B \IMP C}$\\
		5. & $\vliinf{}{}{B}{A}{A \IMP B}$ &12. & $\vliinf{}{}{ A \AND B}{A}{B}$\\
		6. & $\vlinf{}{}{B \IMP A}{A} $& 13. & $\vliinf{}{}{(A \IMP C) \IFF (B \IMP D) }{A \IFF B}{C \IFF D}$\\
		7. & $ A \AND(B \OR C) \IMP (A \AND B ) \OR (A \AND C) $ &14. & $\bot \IMP A$  \\
	\end{tabular}
\end{center}
\caption{The Hilbert-style system for $\wf$ }
\label{fig:axioms:wf}
\end{figure}

 \begin{dfn}\label{def:hilbert:wf} 
The Hilbert-style axiomatization of the basic subintuitionistic logic \wf consists of the axioms and inference rules reported in Figure~\ref{fig:axioms:wf}. 
\end{dfn}  
In Figure~\ref{fig:axioms:wf}, the rules should be applied in such a way that, if the formulas above the line are theorems of \wf, then the formula below the line is a theorem as well and we will refer to rules 5, 6, and 12 as the modus ponens (MP), a fortiori (AF), and conjunction rules, respectively.
The basic notion $ \vdash_{\wf} A$ means that $ A $ can be drived from the axioms of the \wf by means of its rules. But, when one axiomatizes local validity, not all rules in a Hilbert type system have the same status when one considers deductions from assumptions.
 In the case of deductions from assumptions, we impose restrictions on all rules except the conjunction rule.
The restriction on modus ponens is slightly weaker than on the other rules; when concluding $ B $ from $ A $, $ A\rightarrow B $ only the implication $ A\rightarrow B $ need be a theorem. These considerations lead to the following definition.

In this paper we will denote that a formula $ A $ is derivable from $ \Gamma $ in the Hilbert style system of logic {\sf L} as $ \Gamma\,{\vdash_{H{\sf L}}} A $.

\begin{dfn}\label{hh}
We define $ \Gamma\,{\vdash_{H\wf}} A $ iff there is a  derivation of A from $ \Gamma $ using the rules 6, 9, 10 , 11 and 13 of Figure~\ref{fig:axioms:wf}, only when there are  no assumptions, and the rule 5, MP, only when the derivation of $ A\rightarrow B $ contains no assumptions.
\end{dfn}  

The logic \wf can be extended by adding various rules and axiom schemes. In this paper, we will focus on the following axioms and rules:
$$(A\rightarrow B)\wedge (B\rightarrow C)\rightarrow(A\rightarrow  C)~~~~~~~~~~{\sf I}$$
$$(A\rightarrow B)\wedge (A\rightarrow C)\rightarrow(A\rightarrow B\wedge C)  ~~~~{\sf C}$$
$$(A\rightarrow C)\wedge (B\rightarrow C)\rightarrow(A\vee B \rightarrow C)  ~~~~{\sf D}$$
$$(A\rightarrow B\wedge C)\rightarrow (A\rightarrow B)\wedge (A\rightarrow C) ~~~~{\sf \widehat{C}} $$
$$ (A\vee B \rightarrow C)\rightarrow (A\rightarrow C)\wedge (B\rightarrow C) ~~~~{\sf \widehat{D}}$$

$$ \frac{A\rightarrow B\vee C~~~~~C\rightarrow A\vee D~~~~~A\wedge  D\rightarrow B~~~~~ C\wedge B\rightarrow D}{(A\rightarrow B)\leftrightarrow (C\rightarrow D)}~~~~{\sf N}$$
$$~~~~~ \frac{C\rightarrow A\vee D~~~~~~~ C\wedge B\rightarrow D}{(A\rightarrow B)\rightarrow (C\rightarrow D)}~~~~~~~~~~{\sf N_{2}}$$
If $ \Gamma\subseteq\lbrace {\sf I}, {\sf C},{\sf D}, {\sf \widehat{C}}, {\sf \widehat{D}},  {\sf N}, {\sf N_{2}}\rbrace $, we denote by 
${\sf WF\Gamma }$ the logic obtained from {\sf WF} by adding the schemas and rules in 
$ \Gamma $ as new axioms and rules.
Instead of writing ${\sf WFN}$  and ${\sf WFN_{2}}$ we will use ${\sf WF_N}$  and ${\sf WF_{N_{2}}}$, respectively.
The logic \f is the smallest set of formulas closed under instances of \wf, {\sf C}, {\sf D} and {\sf I} \cite{FD}. 

A weak form of the deduction theorem with a single assumption is obtained:

\begin{thm}\label{z3}
{\rm (Weak Deduction theorem, \cite{FD, Dic4})}
\begin{enumerate}
\item[$ \bullet $] $A\vdash_{H{\sf WF\Gamma}} B  ~$ iff $ ~\vdash_{H{\sf WF\Gamma}} A\rightarrow B $.
\item[$ \bullet $] $A_1,\dots,A_n\vdash_{H{\sf WF\Gamma}} B  ~$ iff $~ \vdash_{H{\sf WF\Gamma}} A_1\wedge\dots\wedge A_n\rightarrow B $.
\end{enumerate}
\end{thm}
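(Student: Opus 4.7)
The plan is to prove the first bullet by induction on the length of a derivation witnessing $A\vdash_{H{\sf WF\Gamma}} B$, and then to reduce the second bullet to the first by conjoining the assumptions. The easy direction of the first bullet is immediate: if $\vdash_{H{\sf WF\Gamma}} A\rightarrow B$, a single application of MP from the assumption $A$ yields $B$, and this use of MP is legal because $A\rightarrow B$ has been derived without assumptions.

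For the forward direction I would induct on the derivation length. Two base cases arise: $B$ is the assumption $A$, handled by axiom~8; and $B$ is a theorem (an axiom, or derived without assumptions), handled by the AF rule~6, which applies precisely because its premise $B$ has no open assumptions. The inductive step is a case analysis on the last rule used in the derivation of $B$ from $A$. The MP case is the delicate one: if $B$ is obtained from $C$ and $C\rightarrow B$, the restriction in Definition~\ref{hh} forces $C\rightarrow B$ to be a theorem; the induction hypothesis (or AF, if $C$ is already a theorem) yields the theorem $A\rightarrow C$, and rule~9 applied to the two theorems $A\rightarrow C$ and $C\rightarrow B$ produces $\vdash A\rightarrow B$. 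The conjunction case reduces to rule~10 applied to the two theorems furnished by the induction hypothesis. All remaining rules of Figure~\ref{fig:axioms:wf}, as well as the extra rules in $\Gamma\subseteq\{{\sf I},{\sf C},{\sf D},{\sf \widehat{C}},{\sf \widehat{D}},{\sf N},{\sf N}_{2}\}$, require assumption-free premises by Definition~\ref{hh}, so their conclusions are theorems and a single AF step closes the argument.

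For the second bullet I would use the fact that $A_1\wedge\cdots\wedge A_n\rightarrow A_i$ is a theorem of \wf{} for each $i$, obtained by iterating axioms~3 and~4 together with rule~9. From the single assumption $A_1\wedge\cdots\wedge A_n$ one may therefore derive each $A_i$ by MP, and splicing these short derivations into a given derivation of $B$ from $A_1,\ldots,A_n$ produces a derivation of $B$ from the single conjoined assumption. Applying the first bullet then yields $\vdash A_1\wedge\cdots\wedge A_n\rightarrow B$, and the converse direction is again one step of MP.

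The main subtlety lies in the MP case of the induction: the asymmetric restriction in Definition~\ref{hh}, where MP is permitted only when $A\rightarrow B$ is assumption-free but $A$ itself may still depend on assumptions, is exactly what makes rule~9 applicable at the key step. If that restriction were either weaker or stronger the induction would break, so the bulk of the verification is really just checking that the Hilbert-style conventions about assumptions line up with the requirements of each rule case, including the newly added rules {\sf N} and {\sf N}$_2$ in the extended systems.
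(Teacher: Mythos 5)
The paper does not prove this theorem at all: it is imported verbatim from the cited references \cite{FD, Dic4}, so there is no in-paper argument to compare against. Your reconstruction is, however, the standard and correct one, and it matches how the restrictions of Definition~\ref{hh} are actually exploited later in the paper (e.g.\ in Theorems~\ref{12} and~\ref{33}, where rules 9, 10, 13 and ${\sf N}$ are only ever applied to theorems). The key observations are all present: the backward direction is one legal application of MP; in the forward induction the only rules that can fire under an open assumption are MP with a theorem as major premise (closed off by rule~9), the unrestricted conjunction rule (closed off by rule~10), and the identity/axiom base cases (closed off by axiom~8 and AF); every other rule, including ${\sf N}$ and ${\sf N}_2$ in the extensions, has assumption-free premises and so its conclusion is already a theorem. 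One small point worth making explicit: Definition~\ref{hh} is literally stated only for $\vdash_{H\wf}$, so your claim that the added rules of ${\sf WF\Gamma}$ also require assumption-free premises is an (entirely reasonable, and clearly intended) extrapolation rather than something the paper states. Also, in the converse of the second bullet you need one unrestricted application of rule~12 to assemble $A_1\wedge\dots\wedge A_n$ from the separate assumptions before the final MP step, not just "one step of MP''; this is trivial but should be said.
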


In the subsequent sections,  we will present the first natural deduction system for the subintuitionistic logics \wf,  ${\sf WF_N}$, ${\sf WF_{N_{2}}}$, ${\sf WF}{\sf \widehat{C}}$, ${\sf WF}{\sf \widehat{D}}$, ${\sf WFI}$, ${\sf WFC}$, ${\sf WFD}$ and \f.

\section{Natural Deduction system}\label{binneigh}
In this section, along with introducing the natural deduction system for the logics \wf,  ${\sf WF_N}$, ${\sf WF_{N_{2}}}$, ${\sf WF}{\sf \widehat{C}}$, ${\sf WF}{\sf \widehat{D}}$, ${\sf WFI}$, ${\sf WFC}$, ${\sf WFD}$ and \f , we will prove that these natural deduction systems are equivalent to their corresponding Hilbert systems.

Assumptions in a natural deduction proof can be either canceled (or discharged) or uncanceled (or open). Initially, all assumptions are open, but specific inferences allow for some assumptions to be discharged. This means that by making such an inference, an assumption that was previously needed is no longer required.
To denote that a formula has been discharged, we will enclose it in square brackets, as in $ [A] $.

In this section we will denote that a formula $ A $ is derivable from $ \Gamma $ in the natural deduction proof system of logic {\sf L} as $ \Gamma  \vdash_{N\!{\sf L}}A$.

\begin{dfn}
We define $ \Gamma  \vdash_{N\!{\sf L}} A$ if there is a derivation with conclusion $ A $ and with all (uncancelled) hypotheses in $\Gamma $. If $\Gamma=\varnothing $, we write $  \vdash_{N\!{\sf L}} A$, and we say that $ A $ is a theorem.
\end{dfn}

\subsection{Natural Deduction system for {\sf WF}}

\begin{dfn}\label{def:natded:wf}
	The rules of the natural deduction system for \wf can be found in Figure~\ref{fig:natded:wf}. 
\end{dfn}

\begin{figure}[]
	\begin{center}
		\begin{tabular}{c c}
			$\vliinf{}{\intrule{\AND}}{ A \AND B}{ 
				\begin{matrix}
					\mathcal{D}_{1} \\
					A
				\end{matrix} 
			}{
				\begin{matrix}
					\mathcal{D}_{2} \\
					B
				\end{matrix} 
			}$ 
			& 
			$\vlinf{i\in\{1,2\}}{\elrule{\AND}_i}{A_i}{
				\begin{matrix}
				\mathcal{D} \\
				A_1 \AND A_2
			\end{matrix}
			}$
			\\[1.5cm]
				$\vlinf{i\in\{1,2\}}{\intrule{\OR}_i}{A_1 \AND A_2}{
				\begin{matrix}
					\mathcal{D} \\
					A_i
				\end{matrix}
			}$
			&
			$
			\vliiinf{}{\elrule{\OR}}{C}{
			\begin{matrix}
				~\\
				\mathcal{D} \\
				A \OR B 
			\end{matrix}
			}{
			\begin{matrix}
				[A]\\
				\mathcal{D}_{1} \\
				C
			\end{matrix}
			}{
			\begin{matrix}
				[B]\\
				\mathcal{D}_{2} \\
				C
			\end{matrix}
			}
			$
			\\[1.5cm]
			$\vlinf{(\star)}{\intrule{\IMP}}{A \IMP B}{
			\begin{matrix}
				[A]\\
				\mathcal{D} \\
				B
			\end{matrix}
			}$
			& 
			$\vliinf{(\ddagger)}{\elrule{\IMP}}{B}{
				\begin{matrix}
					~\\
					\mathcal{D}_{1} \\
					A
				\end{matrix}
			}{ 
				\begin{matrix}
					~\\
					\mathcal{D}_{2} \\
					A \IMP B
				\end{matrix}
			}$
		\\[1.5cm]
		$\vliiinf{(\dagger)}{\intrule{\IMP}_1}{(A \IMP B)\IMP (A\IMP D)}{
			\begin{matrix}
				[B]\\
				\mathcal{D}_{1} \\
				D
			\end{matrix}
		}{\qquad}{
		\begin{matrix}
			[D]\\
			\mathcal{D}_{2} \\
			B
			\end{matrix}
		}$ & 
		$\vliiinf{(\dagger)}{\intrule{\IMP}_2}{(B \IMP A)\IMP (D\IMP A)}{
			\begin{matrix}
				[B]\\
				\mathcal{D}_{1} \\
				D
			\end{matrix}
		}{\qquad}{
			\begin{matrix}
				[D]\\
				\mathcal{D}_{2} \\
				B
			\end{matrix}
		}$
		\\[1.5cm]
		\multicolumn{2}{c}{
			$\vlinf{}{\bot}{A}{
			\begin{matrix}
				\mathcal{D} \\
				\bot 
			\end{matrix}
			}$
		}\\[1cm]
		\multicolumn{2}{l}{{\footnotesize$(\star)$ The rule can be applied only if $A$ is the only assumption.}}\\
        \multicolumn{2}{l}{{\footnotesize$(\ddagger)$ The rule can be applied only if there are no (uncancelled) assumptions for $A\rightarrow B  $.}}\\
		\multicolumn{2}{l}{{\footnotesize$(\dagger)$ The rule can be applied only if $B$ and $D$ are the only assumptions.}}
		\end{tabular}
	\end{center}
\caption{Natural deduction rules for \wf}
\label{fig:natded:wf}
\end{figure}

Most of the  rules of the natural deduction system for \wf are as the natural deduction rules of intuitionistic logic, except for $\rightarrow\!I, \rightarrow\!I_{1}, \rightarrow\!I_{2}$ and $ \rightarrow\!E $.
Note that in the rules $\rightarrow\!I, \rightarrow\!I_{1}, \rightarrow\!I_{2}$ and $ \vee E $ hypotheses cancelled, this is indicated by the striking out of the hypothesis. 
For further details regarding the rule $\rightarrow\!E $, note that all assumptions in the subderivation starting at $ A\rightarrow B $ need to be closed by rules that occur in the subderivation starting at $ A\rightarrow B $. This means that we can apply the rule of $\rightarrow\!E $ only if the derivation having root at $A \rightarrow B$ does not have any other open (that is, not discharged) assumptions.
For example, we can derive the formula  $(A\rightarrow B)\wedge (B\rightarrow C)\rightarrow (A\rightarrow C)$ in the intuitionistic logic (\ipc) as follows:
$$ \frac{\frac{\frac{\frac{\frac{[(A\rightarrow B)\wedge (B\rightarrow C)]^{1}}{A\rightarrow B}\wedge E~~~~\genfrac{}{}{0pt}{}{}{[A]^{2}}}{B}\rightarrow E~~~~\frac{[(A\rightarrow B)\wedge (B\rightarrow C)]^{1}}{B\rightarrow C}\wedge E}{C}\rightarrow E}{A\rightarrow C}\rightarrow I^{2}}{(A\rightarrow B)\wedge (B\rightarrow C)\rightarrow (A\rightarrow C)}\rightarrow I^{1} $$
But according to the  natural deduction system for \wf, we cannot have the above inference in the natural deduction system \wf (we already knew that this axiom is not valid in the \wf), because the assumption $(A\rightarrow B)\wedge (B\rightarrow C) $  in the subderivation starting at $ A\rightarrow B $ did not discharged and so we can not use $\rightarrow E$ rule.
Note that, the assumptions of $ A\rightarrow B $ are closed by the rules of introduction implication, which are outside of the subderivation having $ A\rightarrow B $ as root.

The following theorems show that the Hilbert-style proof system and the natural deduction proof system of subintuitionistic  logic {\sf  WF} are equivalent.

\begin{thm}\label{11}
If $ \Gamma  \vdash_{H\wf}A$ then $  \Gamma\vdash_{N\!\wf} A $.
\end{thm}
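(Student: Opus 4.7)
I would argue by induction on the length of a Hilbert derivation witnessing $\Gamma \vdash_{H\wf} A$, following the shape of Definition~\ref{hh}. In the base case, either $A\in\Gamma$, so $A$ by itself is already a valid ND derivation, or $A$ is an instance of one of the axioms 1--4, 7, 8, 14 of Figure~\ref{fig:axioms:wf}. Each such axiom admits a short uniform ND proof that discharges a single open assumption via $\rightarrow I$: axiom~1 by $\vee I_1$ on $[A]$ then $\rightarrow I$, axiom~3 by $\wedge E_1$ on $[A\wedge B]$ then $\rightarrow I$, axiom~7 by combining $\wedge E$, $\wedge I$, $\vee I$ and $\vee E$ under a single assumption $[A\wedge(B\vee C)]$, and axiom~14 by applying the $\bot$ rule to $[\bot]$ before $\rightarrow I$. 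In every case the side condition $(\star)$ of $\rightarrow I$ is satisfied because the intended antecedent remains the unique open hypothesis.

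For the inductive step I would simulate each Hilbert rule in turn. Rule~12 (conjunction) is unrestricted and is handled directly by $\wedge I$. Rule~5 (MP) is admitted in $\vdash_{H\wf}$ only when the Hilbert derivation of $A\to B$ uses no assumptions, so by IH we obtain a closed ND derivation of $A\to B$, which is exactly the side condition $(\ddagger)$ needed to apply $\rightarrow E$ and conclude $B$ from $\Gamma$. Rules~6, 9, 10 and 11 are admitted only when all their premises are theorems; their ND simulations therefore combine closed IH derivations via $\rightarrow E$, $\wedge I$ or $\vee E$ beneath a fresh assumption of the intended antecedent, and finish with $\rightarrow I$, whose side condition $(\star)$ holds because that antecedent is the only remaining open hypothesis.

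The delicate case is rule~13, the congruence rule, and the specialised rules $\rightarrow I_1$ and $\rightarrow I_2$ are precisely designed for it. Given closed ND derivations of $A\leftrightarrow B$ and $C\leftrightarrow D$, and hence (by $\wedge E$) of the four constituent implications, the derivation of $(A\to C)\to(B\to D)$ I have in mind proceeds in five stages: apply $\rightarrow I_1$ with auxiliary subderivations $[C]\vdash D$ and $[D]\vdash C$ (each obtained by $\rightarrow E$ from the closed IH derivations of $C\to D$ and $D\to C$, so $(\ddagger)$ holds) to conclude $(A\to C)\to(A\to D)$; then $\rightarrow E$ on a fresh assumption $[A\to C]$ gives $A\to D$; next apply $\rightarrow I_2$ with $[A]\vdash B$ and $[B]\vdash A$ (via the closed derivations of $A\to B$ and $B\to A$) to obtain $(A\to D)\to(B\to D)$; a further $\rightarrow E$ yields $B\to D$; and finally $\rightarrow I$ closes $[A\to C]$. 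The side condition $(\dagger)$ on $\rightarrow I_1$ and $\rightarrow I_2$ is met because the auxiliary subderivations have only the bracketed formulas as open assumptions. The symmetric direction is analogous, and $\wedge I$ produces $(A\to C)\leftrightarrow(B\to D)$.

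The main obstacle I anticipate is the careful bookkeeping of open versus discharged assumptions throughout the induction: the Hilbert-side restrictions of Definition~\ref{hh} must be read as exactly furnishing the closed derivations demanded by the ND side conditions $(\star),(\ddagger),(\dagger)$, and much of the write-up will consist of verifying this matching step by step, with rule~13 being the case where the extra expressive power of $\rightarrow I_1$ and $\rightarrow I_2$ is indispensable.
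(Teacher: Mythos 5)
Your proposal is correct and follows essentially the same route as the paper: axioms and rules 5, 6, 12 are matched directly to ND rules, rules 9--11 are simulated by closed subderivations under a single fresh antecedent assumption discharged by $\rightarrow I$, and rule 13 is handled by chaining $\rightarrow I_1$ and $\rightarrow I_2$ with two applications of $\rightarrow E$ under an assumption $[A\to C]$ closed at the end by $\rightarrow I$. The only difference is the (immaterial) order in which you replace the consequent and the antecedent in the rule-13 case.
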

\begin{proof}
It is enough to show that all  axioms and rules of  Definition~\ref{def:hilbert:wf} can be deduced in the  natural deduction proof system of {\sf WF}. 
By definitions it is obvious that the rules 5, 6 and 12 of  Definition~\ref{def:hilbert:wf} are the same as the rules $\rightarrow\!E$, $\rightarrow\!I$ and $\wedge I$ of  Definition~\ref{def:natded:wf}, respectively.  So, in the following we just prove that the rules 9, 10, 11 and 13 of  Definition~\ref{def:hilbert:wf} can be deduced in the  natural deduction proof system of {\sf WF}. 

Rule $\textit{9}:~ \frac{A\rightarrow B~~B\rightarrow C}{A\rightarrow C}$

$$
\vlderivation{
\vlin{}{\intrule{\IMP}^1}{A \IMP C}{
\vliin{}{\elrule\IMP}{C}{
\vliin{}{\elrule{\IMP}}{B}{
\vlhy{[A]^1}
}{
\vlhy{A \IMP B}
}	
}{\vlhy{B\IMP C}}
}
}
$$

Rule $\textit{10}:~\frac{A\rightarrow B~~A\rightarrow C}{A\rightarrow B\wedge C}$

$$\vlinf{}{\intrule{\IMP}^{1}}{A \IMP B\wedge C}{
			\begin{matrix}
				\vliinf{}{\intrule{\AND}}{ B \AND C}{ 
				\begin{matrix}
					\vliinf{}{\elrule{\IMP}}{B}{
				\begin{matrix}
					~\\	
				[A]^{1}
				\end{matrix}
			}{ 
				\begin{matrix}
					~\\	
					A \IMP B
				\end{matrix}
			}
				\end{matrix} 
			}{
				\begin{matrix}
					\vliinf{}{\elrule{\IMP}}{C}{
				\begin{matrix}
					~\\	
					[A]^{1}
				\end{matrix}
			}{ 
				\begin{matrix}
					~\\
					A \IMP C
				\end{matrix}
			}
				\end{matrix} 
			}
			\end{matrix}
			}$$

Rule $\textit{11}:~\frac{A\rightarrow C~~B\rightarrow C}{A\vee B \rightarrow  C}$
$$\vlinf{}{\intrule{\IMP}^{3}}{A\OR B \IMP C}{
			\begin{matrix}
				\vliiinf{}{\elrule{\OR}^{1,2}}{C}{
			\begin{matrix}
				~\\
	\\
				[A \OR B]^{3}
			\end{matrix}
			}{
			\begin{matrix}
				\vliinf{}{\elrule{\IMP}}{C}{
				\begin{matrix}
					~\\	
					[A]^{1}
				\end{matrix}
			}{ 
				\begin{matrix}
					~\\	
					A \IMP C
				\end{matrix}
			}
			\end{matrix}
			}{
			\begin{matrix}
				\vliinf{}{\elrule{\IMP}}{C}{
				\begin{matrix}
					~\\
			     [B]^{2}	
				\end{matrix}
			}{ 
				\begin{matrix}
					~\\	
					B \IMP C
				\end{matrix}
			}
			\end{matrix}
			}
			\end{matrix}
			}$$

Rule $\textit{13}:~\frac{A\leftrightarrow B~~C\leftrightarrow D}{(A\rightarrow C)\leftrightarrow (B\rightarrow D)}$
		
$$  \frac{\frac{\frac{\genfrac{}{}{0pt}{}{}{[A\rightarrow C]^{5}}~~~\frac{\frac{[A]^{1}~~~A\rightarrow B}{B}\rightarrow\!E~~~\frac{[B]^{2}~~~B\rightarrow A}{A}\rightarrow\!E}{(A\rightarrow C)\rightarrow (B\rightarrow C)}\rightarrow\!I^{1,2}_{1}}{B\rightarrow C}\rightarrow\!E~~~\frac{\frac{[C]^{3}~~~C\rightarrow D}{D}\rightarrow\!E~~~\frac{[D]^{4}~~~D\rightarrow C}{C}\rightarrow\!E}{(B\rightarrow C)\rightarrow (B\rightarrow D)}\rightarrow\!I^{3,4}_{2}}{B\rightarrow D}\rightarrow\!E}{(A\rightarrow C)\rightarrow (B\rightarrow D)}\rightarrow\!I^{5}$$

We call this derivation $ \mathcal{D} $. So, we have the derivation $ \mathcal{D} $ with conclusion $  (A\rightarrow C)\rightarrow (B\rightarrow D)$.
Similarly to this we can construct a derivation $ \mathcal{D^{'}} $ with conclusion $(B\rightarrow D)\rightarrow (A\rightarrow C)$. Hence:


$$\vliinf{}{\intrule{\AND}}{ (A\rightarrow C)\leftrightarrow (B\rightarrow D)}{ 
				\begin{matrix}
					\mathcal{D} \\
					(A\rightarrow C)\rightarrow (B\rightarrow D)
				\end{matrix} 
			}{
				\begin{matrix}
					 \mathcal{D^{'}}  \\
					(B\rightarrow D)\rightarrow(A\rightarrow C)
				\end{matrix} 
			}$$

Finally, we show that  the axiom 7 of Definition~\ref{def:hilbert:wf} can be deduced in the natural deduction proof system of {\sf WF} as follows. Other axioms are easy to deduce.
$$\frac{\dfrac{\frac{\frac{\frac{[A\wedge (B\vee C)]^{1}}{A}\wedge E~~~~[B]^{2}}{A\wedge B}\wedge I}{(A\wedge B) \vee(A\wedge C)}\vee I~~~~~\frac{\frac{\frac{[A\wedge (B\vee C)]^{1}}{A}\wedge E~~~~[C]^{3}}{A\wedge C}\wedge I}{(A\wedge B) \vee(A\wedge C)}\vee I~~~~\frac{[A\wedge (B\vee C)]^{1}}{B\vee C}(\wedge E)}{(A\wedge B) \vee(A\wedge C)}\vee E^{2,3}}{A\wedge (B\vee C)\rightarrow (A\wedge B) \vee(A\wedge C)} \rightarrow I^{1}$$
\end{proof}

\begin{thm}\label{12}
If $ \Gamma  \vdash_{N\!\wf}A$ then $  \Gamma\vdash_{H\wf} A $.
\end{thm}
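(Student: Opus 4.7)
The proof proceeds by structural induction on the natural deduction derivation witnessing $\Gamma \vdash_{N\wf} A$, using the weak deduction theorem (Theorem~\ref{z3}) as the main bridge: at each step I translate a situation $\Delta \vdash_{H\wf} B$ into the theorem $\vdash_{H\wf} \bigwedge \Delta \rightarrow B$, manipulate these theorems with the axioms and rules of Figure~\ref{fig:axioms:wf}, and then translate back. The base case (an assumption in $\Gamma$) and the rules without discharge ($\wedge I$, $\wedge E_{i}$, $\vee I_{i}$, $\bot$) are immediate from axioms 1--4, 14 and rule 12, combined with rules 9 and 10 (and axiom 8 for reflexivity on $\bigwedge\Gamma$) to lift the construction through the common antecedent $\bigwedge\Gamma$.

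For $\rightarrow\!I$, the side condition $(\star)$ that $A$ is the only open assumption of the subderivation lets the induction hypothesis deliver $A \vdash_{H\wf} B$ directly, whence $\vdash_{H\wf} A \rightarrow B$ by Theorem~\ref{z3}, which is then available from any $\Gamma$. For $\rightarrow\!E$, the side condition $(\ddagger)$ ensures the induction hypothesis yields $\vdash_{H\wf} A \rightarrow B$ as a \emph{theorem}, so the restriction on modus ponens in Definition~\ref{hh} is met and $\Gamma \vdash_{H\wf} B$ follows from $\Gamma \vdash_{H\wf} A$. The case $\vee E$ takes a little more work: from $\vdash \bigwedge\Gamma \rightarrow A \vee B$ together with $\vdash \bigwedge\Gamma \wedge A \rightarrow C$ and $\vdash \bigwedge\Gamma \wedge B \rightarrow C$, combine the latter two via rule 11 to get $\vdash (\bigwedge\Gamma \wedge A) \vee (\bigwedge\Gamma \wedge B) \rightarrow C$, prepend the distributivity axiom 7, and then chain with rules 9 and 10 to conclude $\vdash \bigwedge\Gamma \rightarrow C$. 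The two genuinely subintuitionistic rules $\rightarrow\!I_{1}$ and $\rightarrow\!I_{2}$ are handled via rule 13: the side condition $(\dagger)$ forces the subderivations to have only $B$ or $D$ open, so the induction hypothesis supplies $B \vdash_{H\wf} D$ and $D \vdash_{H\wf} B$, hence $\vdash_{H\wf} B \leftrightarrow D$; combined with $\vdash_{H\wf} A \leftrightarrow A$ (axiom 8 plus rule 12), rule 13 yields $\vdash_{H\wf} (A\rightarrow B) \leftrightarrow (A\rightarrow D)$, respectively $\vdash_{H\wf} (B\rightarrow A) \leftrightarrow (D\rightarrow A)$, and extracting one direction using axiom 3 and MP finishes the case.

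The main obstacle will be the disciplined bookkeeping of open and discharged assumptions across sub-derivations, so as to certify at each translation step that the side-conditions of Definition~\ref{hh} restricting rules 5, 6, 9--11, 13 to the no-assumption case (or, for MP, to the theorem-premise case) are actually met. The $\rightarrow\!E$ and $\rightarrow\!I_{1,2}$ cases rely crucially on this alignment between the natural-deduction side conditions and the Hilbert-side restrictions, and once the accounting is carried out rule by rule, the induction closes uniformly.
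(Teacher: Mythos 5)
Your proposal is correct and follows essentially the same route as the paper's proof: induction on the derivation, with Theorem~\ref{z3} as the bridge between $\Gamma\vdash_{H\wf}$ and provable implications, the side conditions $(\star)$, $(\ddagger)$, $(\dagger)$ matching the Hilbert-side restrictions of Definition~\ref{hh}, and rule 13 together with $\vdash A\leftrightarrow A$ handling $\intrule{\IMP}_{1}$ and $\intrule{\IMP}_{2}$. You in fact spell out the $\elrule{\OR}$ and $\elrule{\IMP}$ cases (via axiom 7 and rules 9--11) that the paper dismisses as ``similar,'' so no further changes are needed.
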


\begin{proof}
The proof is by induction on the complexity of $ A $ from $\Gamma  $:

Assume $ A \in  \Gamma  $, then it is obvious that $  \Gamma\vdash_{H\wf} A $.

Assume $ A=B\wedge C $, $ \Gamma  \vdash_{N\wf}A$ and the last derivation is by the rule $\wedge I$. Then we have $ \Gamma  \vdash_{N\wf}B$ and $ \Gamma  \vdash_{N\wf}C$. So by the induction hypothsis we conclude $ \Gamma  \vdash_{H\wf}B$ and $ \Gamma  \vdash_{H\wf}C$. Hence by the conjunction rule we conclude that $ \Gamma  \vdash_{H\wf} B\wedge C$.

Assume $ \Gamma  \vdash_{N\wf}A$ and the last derivation is by the rule $\wedge E$. Then there exists a formula $ B $ such that  $ \Gamma  \vdash_{N\wf}A\wedge B$. So, by the induction hypothesis $ \Gamma  \vdash_{H\wf}A\wedge B$. On the other hand we have $  \vdash_{H\wf}A\wedge B\rightarrow A  $. Hence, by the MP rule we conclude that $ \Gamma  \vdash_{H\wf} A$.

Assume  $ A=B\rightarrow C $ and the last derivation is by the rule $\rightarrow I$. This means that $B \vdash_{N\wf}   C$. Then by the induction hypothesis, $B \vdash_{H\wf}   C$. Hence by Theorem~\ref{z3}, we conclude $  \vdash_{H\wf} B\rightarrow   C $ and so $ \Gamma  \vdash_{H\wf} B\rightarrow C$.

Assume $ A=(C\rightarrow B)\rightarrow (C\rightarrow D) $, $ \Gamma  \vdash_{N\wf}A$ and the last derivation is by the rule $ \rightarrow\!I_{1} $. This implies that $ D\vdash_{N\wf} B$ and  $B\vdash_{N\wf} D$. So, by the induction hypothesis we have  $ D\vdash_{H\wf} B$ and  $B\vdash_{H\wf} D$. By applying Theorem~\ref{z3} and the conjunction rule, we conclude $ \vdash_{H\wf} B\leftrightarrow D  $. On the other hand, we have $ \vdash_{H\wf} C\leftrightarrow C  $. Therefore, by rule 13, we have  $\Gamma  \vdash_{H\wf}  (C\rightarrow B)\leftrightarrow (C\rightarrow D) $, and hence $ \Gamma  \vdash_{H\wf}A$.

The proofs of the other cases are similar.
\end{proof}

\subsection{Natural Deduction system for ${\sf WF_{N}}$}
If we consider the rule $ \rightarrow_{N}\!I $ instead of $ \rightarrow\!I_{1} $ and $ \rightarrow\!I_{2} $  in the natural deduction system for \wf, then we obtain the natural deduction system for ${\sf WF_{N}}$.
$$	\vlderivation {
\vliiiin{}{\rightarrow_{N}\!I}{(A\rightarrow B)\rightarrow (C\rightarrow D)}{
\vlhy {\begin{matrix}
				[A]\\
				\mathcal{D}_{0} \\
				C\vee B
			\end{matrix}}}
{
\vlhy {\begin{matrix}
				[D]\\
				\mathcal{D}_{1} \\
				B
			\end{matrix}}}
{
\vlhy {\begin{matrix}
				[C]\\
				\mathcal{D}_{2} \\
				A\vee D
			\end{matrix}}}
{
\vlhy {\begin{matrix}
				[B]\\
				\mathcal{D}_{3} \\
				D
			\end{matrix}}}}$$
	
Note that the rule $ \rightarrow_{N}\!I $ can be applied only if $A$, $ B $, $ C $ and $ D $ are the only assumptions.

The following theorems show that the Hilbert-style proof system and the natural deduction proof system of subintuitionistic  logic ${\sf WF_{N}}$ are equivalent.

\begin{thm}\label{22}
If $ \Gamma  \vdash_{H{\sf WF_{N}}}A$ then $  \Gamma\vdash_{N\!{\sf WF_{N}}} A $.
\end{thm}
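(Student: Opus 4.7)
The plan is to follow the template of Theorem~\ref{11}: I will show that every axiom and rule of the Hilbert system for ${\sf WF_{N}}$ is derivable in the natural deduction system $N\!{\sf WF_{N}}$. Since $N\!{\sf WF_{N}}$ differs from $N\!{\sf WF}$ only in that $\rightarrow\!I_{1}$ and $\rightarrow\!I_{2}$ are replaced by the more general $\rightarrow_{N}\!I$, the derivations from Theorem~\ref{11} for axioms 1--4, 7, 8, 14 and rules 5, 6, 9, 10, 11, 12 transfer verbatim, as they use only $\wedge$, $\vee$, $\rightarrow\!I$, $\rightarrow\!E$ and $\bot$-rules. For rule 13, whose treatment in Theorem~\ref{11} used $\rightarrow\!I_{1}$ and $\rightarrow\!I_{2}$, I will first check that both of those rules are admissible special cases of $\rightarrow_{N}\!I$ obtained by collapsing the extra parameters (e.g.\ instantiating $C := A$ and filling the first and third subderivations of $\rightarrow_{N}\!I$ with a single application of $\vee\!I$ from $[A]$); after this, the original derivation of rule 13 goes through with $\rightarrow_{N}\!I$ in place of $\rightarrow\!I_{1}$ and $\rightarrow\!I_{2}$.

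The only genuinely new obligation is to simulate the rule ${\sf N}$. Assuming as premises the four theorems $A \rightarrow B \vee C$, $C \rightarrow A \vee D$, $A \wedge D \rightarrow B$ and $C \wedge B \rightarrow D$, I will derive $(A \rightarrow B) \leftrightarrow (C \rightarrow D)$ by proving the two directions separately and combining them with $\wedge\!I$. For the forward direction I plan to apply $\rightarrow_{N}\!I$ with the following subderivations: (i) from $[A]$, obtain $B \vee C$ by $\rightarrow\!E$ with the premise $A \rightarrow B \vee C$ and then swap the disjuncts via $\vee\!E$ with $\vee\!I_{1},\vee\!I_{2}$ to reach $C \vee B$; (ii) from $[A]$ and $[D]$, form $A \wedge D$ by $\wedge\!I$ and apply $\rightarrow\!E$ with $A \wedge D \rightarrow B$ to obtain $B$; (iii) from $[C]$, derive $A \vee D$ directly by $\rightarrow\!E$ with $C \rightarrow A \vee D$; (iv) from $[C]$ and $[B]$, analogously to (ii), derive $D$ using $C \wedge B \rightarrow D$. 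The converse direction will be obtained by a symmetric application of $\rightarrow_{N}\!I$, exploiting the fact that the four premises of ${\sf N}$ are invariant under the simultaneous swap $(A,B) \leftrightarrow (C,D)$ modulo commutativity of $\vee$ and $\wedge$. Every invocation of $\rightarrow\!E$ respects the $(\ddagger)$ side condition because the four premise formulas are theorems and hence have no open assumptions.

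The main obstacle I anticipate is the correct reading of the ``only assumptions'' side condition on $\rightarrow_{N}\!I$: the subderivations (ii) and (iv) need to discharge $A$ together with $D$, and $C$ together with $B$, respectively, even though the displayed schema highlights only $[D]$ and $[B]$. The plan assumes that the side condition is to be interpreted collectively (``the open assumptions across the four subderivations lie in $\{A,B,C,D\}$, and are all discharged by $\rightarrow_{N}\!I$''), which matches the shape of the Hilbert rule ${\sf N}$ exactly. A secondary concern is the clean re-derivation of rule 13 in the absence of $\rightarrow\!I_{1}$ and $\rightarrow\!I_{2}$, but this should be routine once those rules are seen as degenerate instances of $\rightarrow_{N}\!I$.
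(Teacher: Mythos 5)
Your proposal is correct and follows essentially the same route as the paper: the paper likewise reduces everything (via Theorem~\ref{11}) to deriving rule ${\sf N}$ by a single application of $\rightarrow_{N}\!I$ whose four subderivations are exactly your (i)--(iv), plus a symmetric application for the converse direction, and the paper's own derivation confirms your collective reading of the side condition, since its second and fourth subderivations discharge $A$ together with $D$ and $C$ together with $B$. Your explicit check that $\rightarrow\!I_{1}$ and $\rightarrow\!I_{2}$ are degenerate instances of $\rightarrow_{N}\!I$ (needed to recover the derivation of rule 13, since those rules are replaced rather than retained) is a detail the paper leaves implicit, but it is both necessary and correct.
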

\begin{proof}
By Theorem \ref{11}, we only need to show that the rule $ {\sf N} $  can be deduced in the  natural deduction proof system of ${\sf WF_{N}}$:
$$  \frac{\frac{[A]^{1}~~A\rightarrow C\vee B}{C\vee B}\!\rightarrow\!E~~\frac{\frac{[D]^{2}~~[A]^{1}}{D\wedge A}\!\wedge\! I~~\genfrac{}{}{0pt}{}{ }{D\wedge A\rightarrow B}}{B}\!\rightarrow\!E~~\frac{[C]^{3}~~C\rightarrow A\vee D}{A\vee D}\!\rightarrow\!E~~\frac{\frac{[B]^{4}~~[C]^{3}}{B\wedge C}\!\wedge\!I~~\genfrac{}{}{0pt}{}{}{B\wedge C\rightarrow D}}{ D}\rightarrow\!E}{(A\rightarrow B)\rightarrow (C\rightarrow D)}\!\rightarrow_{N}\!I$$
and
$$  \frac{\frac{[C]^{3}~~C\rightarrow A\vee D}{A\vee D}\!\rightarrow\!E~~\frac{\frac{[B]^{4}~~[C]^{3}}{B\wedge C}\!\wedge\!I~~\genfrac{}{}{0pt}{}{}{B\wedge C\rightarrow D}}{ D}\rightarrow\!E  ~~\frac{[A]^{1}~~A\rightarrow C\vee B}{C\vee B}\!\rightarrow\!E~~\frac{\frac{[D]^{2}~~[A]^{1}}{D\wedge A}\!\wedge\! I~~\genfrac{}{}{0pt}{}{ }{D\wedge A\rightarrow B}}{B}\!\rightarrow\!E}{(C\rightarrow D)\rightarrow (A\rightarrow B)}\!\rightarrow_{N}\!I$$
\end{proof}

\begin{thm}\label{33}
If $ \Gamma  \vdash_{N\!{\sf WF_{N}}} A$ then $  \Gamma\vdash_{H{\sf WF_{N}}} A $.
\end{thm}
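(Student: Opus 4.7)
The plan is to proceed by induction on the structure of the $N\!{\sf WF_N}$-derivation witnessing $\Gamma\vdash_{N\!{\sf WF_N}} A$, exactly as in the proof of Theorem~\ref{12}. Since ${\sf WF_N}$ extends {\sf WF} by adjoining only the single new natural deduction rule $\rightarrow_N\!I$ (corresponding on the Hilbert side to the rule ${\sf N}$), every inductive case whose last rule already belongs to the natural deduction system for {\sf WF} is handled verbatim as in Theorem~\ref{12}, reading $\vdash_{H{\sf WF_N}}$ in place of $\vdash_{H\wf}$ throughout; this is legitimate because every axiom and rule of {\sf WF} is inherited by ${\sf WF_N}$. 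The only genuinely new case is therefore the one in which the last rule applied is $\rightarrow_N\!I$.

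In that case $A$ has the form $(E\rightarrow F)\rightarrow (G\rightarrow H)$, and the side-condition of $\rightarrow_N\!I$ forces the four subderivations to have a single open assumption each, witnessing
$$E\vdash_{N\!{\sf WF_N}} G\vee F,\quad H\vdash_{N\!{\sf WF_N}} F,\quad G\vdash_{N\!{\sf WF_N}} E\vee H,\quad F\vdash_{N\!{\sf WF_N}} H.$$
The induction hypothesis passes each of these to the Hilbert system, and then the weak deduction theorem (Theorem~\ref{z3}) produces the four implications as theorems: $E\rightarrow G\vee F$, $H\rightarrow F$, $G\rightarrow E\vee H$, and $F\rightarrow H$. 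To align these with the premises of rule ${\sf N}$, the implications with antecedent $H$ and $F$ are strengthened to $E\wedge H\rightarrow F$ and $G\wedge F\rightarrow H$ by combining axiom 4 with rule 9; any commutation of $\vee$ or $\wedge$ needed to match the exact shape of the premises of ${\sf N}$ is itself a theorem of {\sf WF}. An application of rule ${\sf N}$ then yields $(E\rightarrow F)\leftrightarrow (G\rightarrow H)$, and projecting to the left conjunct (axiom 3 together with MP, permissible since everything in sight is a theorem) gives $\vdash_{H{\sf WF_N}} A$, hence $\Gamma\vdash_{H{\sf WF_N}} A$.

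I do not anticipate any serious obstacle: the argument is a mechanical extension of Theorem~\ref{12} with the single new case above. The one point worth underlining is that the side-condition of $\rightarrow_N\!I$, which restricts each subderivation to exactly one open assumption, is precisely what the single-assumption weak deduction theorem needs in order to convert the four subderivations into the four ${\sf WF_N}$-theorems required by rule ${\sf N}$; the bridge from the natural deduction side to the Hilbert side therefore goes through cleanly.
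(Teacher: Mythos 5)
Your proposal is correct and follows essentially the same route as the paper's proof: induction reducing to the single new case of $\rightarrow_N\!I$, the weak deduction theorem converting the four single-assumption subderivations into Hilbert theorems, strengthening to the conjunctive antecedents, applying rule ${\sf N}$, and projecting the resulting biconditional. You are in fact slightly more explicit than the paper about the bookkeeping steps (the $\vee$-commutation and the final projection via axiom 3 and MP), but the argument is the same.
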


\begin{proof}
The proof is by induction on the complexity of $ A $ from $\Gamma  $. By Theorem \ref{12}, we only need to prove the case where the last derivation is by the rule $ \rightarrow_{N}\!I$. 

Assume $ A=(E\rightarrow B)\rightarrow (C\rightarrow D) $, $ \Gamma  \vdash_{N\wf}A$ and the last derivation is by the rule $\rightarrow_{N}\!I$. This implies that $ E\vdash_{N\wf} C\vee B$, $D\vdash_{N\wf} B$, $ C\vdash_{N\wf} E\vee D $ and $ B\vdash_{N\wf} D $. Thus, by the induction hypothesis we have $E\vdash_{H{\sf WF_{N}}} C\vee B$, $D\vdash_{H{\sf WF_{N}}} B$, $ C\vdash_{H{\sf WF_{N}}} E\vee D $ and $ B\vdash_{H{\sf WF_{N\wf}}} D $. By appling Theorem~\ref{z3}, we conclude $ \vdash_{H{\sf WF_{N}}} E\rightarrow  C\vee B$, $ \vdash_{H{\sf WF_{N}}} D\rightarrow  B$, $ \vdash_{H{\sf WF_{N}}} C\rightarrow  E\vee D$  and  $ \vdash_{H{\sf WF_{N}}} B\rightarrow  D$. Therefor, we conclude that $ \vdash_{H{\sf WF_{N}}} C\wedge B\rightarrow  D$ and $ \vdash_{H{\sf WF_{N}}} E\wedge D\rightarrow  B$. Hence by rule $ N $, we conclude that $\Gamma  \vdash_{H{\sf WF_{N}}}  (E\rightarrow B)\leftrightarrow (C\rightarrow D) $. Therefore, $ \Gamma  \vdash_{H{\sf WF_{N}}}A$.

\end{proof}

\subsection{Natural Deduction system for ${\sf WF_{N_{2}}}$}

If we consider the rule $\rightarrow_{N_{2}}\!I$  instead of $ \rightarrow\!I_{1} $ and $ \rightarrow\!I_{2} $ in the natural deduction system for \wf, then we obtain the natural deduction system for ${\sf WF_{N_{2}}}$.
$$\vlderivation {
\vliin{}{\rightarrow_{N_{2}}\!I}{(A\rightarrow B)\rightarrow (C\rightarrow D)}{
\vlhy {{
		\begin{matrix}
				[C]\\
			\mathcal{D}_{0} \\
			A\vee D
		\end{matrix}
		}}}
{
\vlhy {{
		\begin{matrix}
				[B]\\
			\mathcal{D}_{1} \\
			D
		\end{matrix}
		}}}} $$
Note that the rule $ \rightarrow_{N_{2}}\!I $ can be applied only if $ C $ and $ B $ are the only assumptions.

The following theorems show that the Hilbert-style proof system and the natural deduction proof system of subintuitionistic  logic ${\sf WF_{N_{2}}}$ are equivalent.

\begin{thm}
If $ \Gamma  \vdash_{H{\sf WF_{N_{2}}}}A$ then $  \Gamma\vdash_{N\!{\sf WF_{N_{2}}}} A $.
\end{thm}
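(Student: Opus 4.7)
The plan is to follow the template of the proof of Theorem~\ref{22}. By Theorem~\ref{11}, all axioms and rules of Definition~\ref{def:hilbert:wf} are already derivable inside the natural deduction system for \wf, and hence in the natural deduction system for ${\sf WF_{N_{2}}}$, which extends \wf with the rule $\rightarrow_{N_{2}}\!I$ in place of $\rightarrow\!I_{1}$ and $\rightarrow\!I_{2}$ (the latter two being easily recoverable as special cases of $\rightarrow_{N_{2}}\!I$ by choosing $C = A$, respectively by swapping the roles of the assumption and conclusion and using $\vee I$). Consequently, the only genuinely new case to treat is the simulation of the Hilbert rule ${\sf N_{2}}$ inside the natural deduction calculus.

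To derive ${\sf N_{2}}$, I would assume that its two premises $C\rightarrow A\vee D$ and $C\wedge B\rightarrow D$ have already been proved, so that they occur as conclusions of subderivations with no open assumptions. I would then build the two subderivations required by $\rightarrow_{N_{2}}\!I$ as follows. In the left subderivation, from the open assumption $[C]$ together with the theorem $C\rightarrow A\vee D$, a single application of $\rightarrow\!E$ produces $A\vee D$. In the right subderivation, from the open assumptions $[C]$ and $[B]$, one application of $\wedge I$ forms $C\wedge B$, and a further application of $\rightarrow\!E$ with $C\wedge B\rightarrow D$ yields $D$. A single application of $\rightarrow_{N_{2}}\!I$, discharging all occurrences of $C$ and $B$, then produces $(A\rightarrow B)\rightarrow (C\rightarrow D)$, which is exactly the conclusion of ${\sf N_{2}}$.

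The only points to check are the side conditions on the rules used. The restriction $(\ddagger)$ on $\rightarrow\!E$ requires the implication premise to carry no open assumptions; this is satisfied because $C\rightarrow A\vee D$ and $C\wedge B\rightarrow D$ enter the derivation as premises of the simulated rule, hence as theorems of ${\sf WF_{N_{2}}}$. The restriction on $\rightarrow_{N_{2}}\!I$ that $C$ and $B$ be the only open assumptions at the point of application is manifest from the construction. I do not anticipate a real obstacle; the argument is essentially a simpler variant of the derivation for rule ${\sf N}$ displayed in the proof of Theorem~\ref{22}, so the main bookkeeping is only to ensure that the discharge pattern matches the indexed assumptions in the two subderivations.
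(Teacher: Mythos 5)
Your proposal is correct and matches the paper's approach: the paper's own proof simply says it is similar to that of Theorem~\ref{22}, and your derivation (obtaining $A\vee D$ from $[C]$ and the theorem $C\rightarrow A\vee D$ by $\elrule{\IMP}$, obtaining $D$ from $[C]$, $[B]$ via $\intrule{\AND}$ and the theorem $C\wedge B\rightarrow D$, then closing with $\rightarrow_{N_{2}}\!I$) is exactly the ${\sf N_{2}}$ analogue of the displayed derivation for rule ${\sf N}$ there. Your added remark that $\rightarrow\!I_{1}$ and $\rightarrow\!I_{2}$ are recoverable from $\rightarrow_{N_{2}}\!I$ is a point the paper leaves implicit but is needed to invoke Theorem~\ref{11}, so it is a welcome addition rather than a divergence.
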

\begin{proof}
The proof is similar to the proof of Theorem \ref{22}.
\end{proof}

\begin{thm}
If $ \Gamma  \vdash_{N\!{\sf WF_{N_{2}}}} A$ then $  \Gamma\vdash_{H{\sf WF_{N}}} A $.
\end{thm}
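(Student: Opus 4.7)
The plan is to proceed by induction on the complexity of the derivation of $A$ in the natural deduction system for ${\sf WF_{N_{2}}}$, following the template of Theorem~\ref{33}. Every case in which the last inference is a rule already present in the natural deduction system for $\wf$ is handled at once by Theorem~\ref{12}, since the conclusion then lands in $H\wf$ and hence in the target Hilbert system (an extension of $\wf$). Consequently the only new case to address is the one where the final inference is the rule $\rightarrow_{N_{2}}\!I$.

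Suppose then that $A = (E\rightarrow B)\rightarrow (C\rightarrow D)$ is obtained by $\rightarrow_{N_{2}}\!I$ from two subderivations: one with conclusion $E\vee D$ whose only open assumption is $C$, and one with conclusion $D$ whose only open assumption is $B$ (the side-condition on $\rightarrow_{N_{2}}\!I$ forces these to be the sole open assumptions). By the induction hypothesis these translate to $C \vdash_{H{\sf WF_{N}}} E\vee D$ and $B \vdash_{H{\sf WF_{N}}} D$, and then the weak deduction theorem (Theorem~\ref{z3}) yields the theorems $\vdash_{H{\sf WF_{N}}} C\rightarrow E\vee D$ and $\vdash_{H{\sf WF_{N}}} B\rightarrow D$. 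Composing the second with the $\wf$-theorem $C\wedge B\rightarrow B$ via rule 9 of Figure~\ref{fig:axioms:wf} produces $\vdash_{H{\sf WF_{N}}} C\wedge B\rightarrow D$.

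The argument then needs a Hilbert-style step that takes the two theorems $C\rightarrow E\vee D$ and $C\wedge B\rightarrow D$ to the conclusion $(E\rightarrow B)\rightarrow (C\rightarrow D)$, and this is exactly the shape of rule ${\sf N_{2}}$. The hard part will be matching this final step to the target Hilbert system: ${\sf N_{2}}$ is not a primitive rule of $H{\sf WF_{N}}$, whose only extra rule over $\wf$ is the four-premise rule ${\sf N}$. Closing the argument therefore requires either deriving a suitable instance of ${\sf N_{2}}$ from ${\sf N}$ within $\wf$ (by instantiating ${\sf N}$ so that the two spurious premises $E\rightarrow B\vee C$ and $E\wedge D\rightarrow B$ become $\wf$-theorems under the chosen substitution), or reading the target Hilbert system, in parallel with Theorems~\ref{22} and~\ref{33}, as $H{\sf WF_{N_{2}}}$; under that reading the argument closes as an immediate mirror of Theorem~\ref{33}, with ${\sf N_{2}}$ replacing ${\sf N}$ at the final step.
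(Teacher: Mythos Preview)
Your approach is essentially the paper's: the paper's own proof consists entirely of the line ``The proof is similar to the proof of Theorem~\ref{33},'' and your induction-plus-$\rightarrow_{N_{2}}\!I$-case argument is precisely that mirror. The discrepancy you flagged in the target system is a genuine typo in the statement---the intended Hilbert system is $H{\sf WF_{N_{2}}}$, not $H{\sf WF_{N}}$---so your second reading is the correct one, and under it the final step closes immediately via rule ${\sf N_{2}}$ exactly as you describe; the first option you sketch (instantiating ${\sf N}$ to recover ${\sf N_{2}}$) cannot work in general, since the formulas $E,B,C,D$ are fixed by the conclusion and the two missing premises of ${\sf N}$ need not be $\wf$-theorems.
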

\begin{proof}
The proof is similar to the proof of Theorem \ref{33}.
\end{proof}

\subsection{Natural Deduction system for ${\sf WF\hat{C}}$}

If we add the following rule to the natural deduction system for \wf, we obtain the natural deduction system for ${\sf WF\hat{C}}$:

$$\vlinf{}{\rightarrow_{\hat{\wedge}}\!I}{(C\rightarrow A) \IMP (C\rightarrow B)}{
			\begin{matrix}
				[A]\\
			\mathcal{D} \\
				B
			\end{matrix}
			}$$
Note that the rule $\rightarrow_{\hat{\wedge}}\!I$ can be applied only if $A$ is the only assumption.

The following theorems show that the Hilbert-style proof system and the natural deduction proof system of subintuitionistic  logic ${\sf WF\hat{C}}$ are equivalent.

\begin{thm}\label{44}
If $ \Gamma  \vdash_{H{\sf WF\hat{C}}}A$ then $  \Gamma\vdash_{N\!{\sf WF\hat{C}}} A $.
\end{thm}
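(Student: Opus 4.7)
The strategy, following the template of Theorem~\ref{11}, is to show that every axiom and rule of the Hilbert system for ${\sf WF\hat{C}}$ is derivable in $N{\sf WF\hat{C}}$, from which an induction on the Hilbert derivation yields the claim. Since the natural deduction system for ${\sf WF\hat{C}}$ is obtained from that of ${\sf WF}$ by adding the single rule $\rightarrow_{\hat{\wedge}}\!I$, Theorem~\ref{11} already handles all the axioms and rules of ${\sf WF}$. The only remaining task is therefore to exhibit a closed $N{\sf WF\hat{C}}$-derivation of the schema $(A \rightarrow B \wedge C) \rightarrow (A \rightarrow B) \wedge (A \rightarrow C)$.

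To build this derivation I would first construct two intermediate closed theorems. Starting from $[B \wedge C]$ as the sole assumption and applying $\wedge E_1$ gives $B$; feeding this subderivation into $\rightarrow_{\hat{\wedge}}\!I$ (with its schematic $A$, $B$ instantiated by $B \wedge C$, $B$ and its context parameter instantiated by $A$) produces the theorem $(A \rightarrow B \wedge C) \rightarrow (A \rightarrow B)$. An entirely symmetric argument, using $\wedge E_2$ in place of $\wedge E_1$, yields $(A \rightarrow B \wedge C) \rightarrow (A \rightarrow C)$ as a second closed theorem. In both applications the side condition on $\rightarrow_{\hat{\wedge}}\!I$ is satisfied, because $[B \wedge C]$ really is the only uncancelled hypothesis in each subderivation.

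Next I would combine these two implications by invoking the derivation of rule~$10$ displayed in the proof of Theorem~\ref{11}. That derivation takes two closed premises of the form $X \rightarrow Y$ and $X \rightarrow Z$ and produces $X \rightarrow (Y \wedge Z)$ by assuming $[X]$, applying $\rightarrow E$ twice, then $\wedge I$, and finally $\rightarrow I$ discharging $[X]$. Setting $X := A \rightarrow B \wedge C$, $Y := A \rightarrow B$, $Z := A \rightarrow C$ yields precisely the axiom ${\sf \hat{C}}$.

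The chief obstacle is purely bookkeeping: keeping the two side conditions in force throughout, namely that the hypothesis discharged by $\rightarrow_{\hat{\wedge}}\!I$ is the only open assumption in its subderivation, and that the major premise of each $\rightarrow E$ meets the closed-derivation requirement $(\ddagger)$. Both constraints are automatically met by the order of construction above, since the two intermediate implications are built as closed theorems before being fed into the rule~$10$ schema; no open hypothesis ever propagates across an instance of $\rightarrow E$ or $\rightarrow_{\hat{\wedge}}\!I$.
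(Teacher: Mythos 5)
Your proposal is correct and matches the paper's proof essentially step for step: the paper likewise first obtains the two closed theorems $(A\rightarrow B\wedge C)\rightarrow(A\rightarrow B)$ and $(A\rightarrow B\wedge C)\rightarrow(A\rightarrow C)$ via $\rightarrow_{\hat{\wedge}}\!I$ applied to $\wedge E$ on the sole assumption $[B\wedge C]$, and then assumes $[A\rightarrow B\wedge C]$, applies $\rightarrow\!E$ twice, $\wedge I$, and $\rightarrow\!I$ --- exactly your ``rule~10'' combination. The side-condition bookkeeping you describe is also the same as in the paper's displayed derivation.
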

\begin{proof}
By Theorem \ref{11}, we only need to show that the axiom $\hat{C}$  can be deduced in the  natural deduction proof system of ${\sf WF\hat{C}}$:
$$\frac{\frac{\frac{\frac{\frac{[B\wedge C]^{1}}{B}\wedge E}{(A\rightarrow B\wedge C)\rightarrow (A\rightarrow B)}\rightarrow_{\hat{\wedge}}\!I^{1}~~~\genfrac{}{}{0pt}{}{}{[A\rightarrow B\wedge C]^{2}}}{A\rightarrow B}\rightarrow E~~~\frac{\frac{\frac{[B\wedge C]^{1}}{C}\wedge E}{(A\rightarrow B\wedge C)\rightarrow (A\rightarrow C)}\rightarrow_{\hat{\wedge}}\!I^{1}~~~\genfrac{}{}{0pt}{}{}{[A\rightarrow B\wedge C]^{2}}}{A\rightarrow C}\rightarrow E}{ (A\rightarrow B)\wedge (A\rightarrow C)}\wedge I}{(A\rightarrow B\wedge C)\rightarrow (A\rightarrow B)\wedge (A\rightarrow C)}\rightarrow I^{2} $$
\end{proof}

\begin{thm}\label{444}
If $ \Gamma  \vdash_{N{\sf WF\hat{C}}} A$ then $  \Gamma\vdash_{H{\sf WF\hat{C}}} A $.
\end{thm}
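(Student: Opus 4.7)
The plan is to proceed by induction on the complexity of the natural deduction derivation of $A$ from $\Gamma$, exactly parallel to the strategy of Theorem \ref{12} (and Theorem \ref{33}). All cases whose final inference is a rule already present in the natural deduction system for $\wf$ are covered verbatim by the proof of Theorem \ref{12}, so the only genuinely new case is when the last rule applied is $\rightarrow_{\hat{\wedge}}\!I$.

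For this new case, suppose $A = (C \rightarrow E) \rightarrow (C \rightarrow B)$ is obtained by $\rightarrow_{\hat{\wedge}}\!I$ from a subderivation of $B$ whose only (undischarged, then discharged) assumption is $E$. By the induction hypothesis, $E \vdash_{H{\sf WF\hat{C}}} B$, and the weak deduction theorem (Theorem \ref{z3}) upgrades this to $\vdash_{H{\sf WF\hat{C}}} E \rightarrow B$. The substantive task is therefore to show, in the Hilbert calculus, that $\vdash_{H{\sf WF\hat{C}}} E \rightarrow B$ implies $\vdash_{H{\sf WF\hat{C}}} (C \rightarrow E) \rightarrow (C \rightarrow B)$, so that $A$ itself becomes a theorem (and hence derivable from $\Gamma$).

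The main obstacle is precisely that ${\sf WF\hat{C}}$ lacks the transitivity axiom $\sf I$, so one cannot simply compose $C \rightarrow E$ with $E \rightarrow B$ inside an implication to get $C \rightarrow B$. The way around this is to route the argument through $\hat{C}$ together with rule 13. First, from $\vdash E \rightarrow B$ and axiom 8 ($\vdash E \rightarrow E$), rule 10 gives $\vdash E \rightarrow E \wedge B$; pairing this with axiom 3 ($\vdash E \wedge B \rightarrow E$) via the conjunction rule yields $\vdash E \leftrightarrow E \wedge B$. Since $\vdash C \leftrightarrow C$ is immediate from axiom 8, rule 13 produces $\vdash (C \rightarrow E) \leftrightarrow (C \rightarrow E \wedge B)$, and in particular the left-to-right conjunct. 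Now axiom $\hat{C}$ supplies $\vdash (C \rightarrow E \wedge B) \rightarrow (C \rightarrow E) \wedge (C \rightarrow B)$, and axiom 4 supplies $\vdash (C \rightarrow E) \wedge (C \rightarrow B) \rightarrow (C \rightarrow B)$. Two successive applications of rule 9 (which is admissible at the theorem level) then chain these steps into $\vdash_{H{\sf WF\hat{C}}} (C \rightarrow E) \rightarrow (C \rightarrow B)$, finishing the induction step and the theorem.
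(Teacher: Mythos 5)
Your proposal is correct and follows essentially the same route as the paper's own proof: reduce to the single new case $\rightarrow_{\hat{\wedge}}\!I$, obtain $\vdash E\rightarrow B$ via the weak deduction theorem, derive $E\leftrightarrow E\wedge B$, apply rule 13 together with $C\leftrightarrow C$, and then chain through axiom $\hat{C}$ and projection to get $(C\rightarrow E)\rightarrow(C\rightarrow B)$. You merely make explicit some justifications (axioms 3, 4, 8, rules 9, 10) that the paper leaves implicit.
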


\begin{proof}
The proof is by induction on the complexity of $A$ from $\Gamma  $. By Theorem \ref{12}, we only need to prove the case where the last derivation is by the rule $ \rightarrow_{\hat{\wedge}}\!I$. 

Assume $ A=(C\rightarrow E)\rightarrow (C\rightarrow B) $, $ \Gamma  \vdash_{N{\sf WF\hat{C}}}A$ and the last derivation is by the rule $\rightarrow_{\hat{\wedge}}\!I$. This implies that $ E\vdash_{N{\sf WF\hat{C}}} B$. So, by the induction hypothesis we have $E\vdash_{H{\sf WF\hat{C}}} B $. By applying Theorem~\ref{z3}, we conclude $ \vdash_{H{\sf WF\hat{C}}} E\rightarrow   B$. Therefore:

1. $ \vdash_{H{\sf WF\hat{C}}}    E\rightarrow   B$~~~~~~~~~~~~~~~~~~~~~~~~~~~~~~~~~~~~~~~~~~~~Assumption

2. $ \vdash_{H{\sf WF\hat{C}}}    E\leftrightarrow  E\wedge B $

3. $ \vdash_{H{\sf WF\hat{C}}}  C\leftrightarrow C $

4. $ \vdash_{h{\sf WF\hat{C}}} (C\rightarrow E)\leftrightarrow (C\rightarrow E\wedge B) $~~~~~~~~~~~~~~~~~~~~By 2, 3 using rule 13 in \wf

5. $ \vdash_{H{\sf WF\hat{C}}} (C\rightarrow E\wedge B)\rightarrow (C\rightarrow E)\wedge   (C\rightarrow B)$~~~~The axiom $ \hat{C} $

6. $ \vdash_{h{\sf WF\hat{C}}}  (C\rightarrow E)\rightarrow (C\rightarrow E)\wedge   (C\rightarrow B)$

7. $ \vdash_{H{\sf WF\hat{C}}} (C\rightarrow E)\wedge   (C\rightarrow B)\rightarrow (C\rightarrow B) $

8. $ \vdash_{H{\sf WF\hat{C}}} (C\rightarrow E)\rightarrow (C\rightarrow B) $

Hence, we conclude that $\Gamma  \vdash_{H{\sf WF\hat{C}}} A$.
\end{proof}

\subsection{Natural Deduction system for ${\sf WF\hat{D}}$}
If we add the following rule to the natural deduction system for \wf, we obtain  the natural deduction system for ${\sf WF\hat{D}}$:
$$\vlinf{}{\rightarrow_{\hat{\vee}}\!I}{(B\rightarrow C) \IMP (A\rightarrow C)}{
			\begin{matrix}
				[A]\\
					\mathcal{D} \\
				B
			\end{matrix}
			}$$

Note that  the rule $\rightarrow_{\hat{\vee}}\!I$ can be applied only if $A$ is the only assumption.

The following theorems show that the Hilbert-style proof system and the natural deduction proof system of subintuitionistic  logic ${\sf WF\hat{D}}$ are equivalent.

\begin{thm}
If $ \Gamma  \vdash_{H{\sf WF\hat{D}}}A$ then $  \Gamma\vdash_{N\!{\sf WF\hat{D}}} A $.
\end{thm}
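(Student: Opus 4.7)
The plan is to imitate the proof of Theorem~\ref{44}. By Theorem~\ref{11} every axiom and rule of the Hilbert-style system for \wf is already derivable in the natural deduction system for \wf, and hence in that of ${\sf WF\hat{D}}$. Consequently it suffices to exhibit a natural-deduction derivation, in ${\sf WF\hat{D}}$, of the extra axiom scheme
$$(A\vee B\rightarrow C)\rightarrow (A\rightarrow C)\wedge (B\rightarrow C).$$

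The key observation is that the new rule $\rightarrow_{\hat{\vee}}\!I$ takes a subderivation from a single open assumption $A$ to a formula $B$ and produces $(B\rightarrow C)\rightarrow (A\rightarrow C)$, discharging $A$. First, starting from $[A]$, an application of $\vee I$ yields $A\vee B$; since $[A]$ is the only assumption, $\rightarrow_{\hat{\vee}}\!I$ is applicable and gives $((A\vee B)\rightarrow C)\rightarrow (A\rightarrow C)$, with $[A]$ discharged. Symmetrically, from $[B]$ one derives $A\vee B$ by $\vee I$ and then $((A\vee B)\rightarrow C)\rightarrow (B\rightarrow C)$, with $[B]$ discharged. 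Next, I open a fresh assumption $[A\vee B\rightarrow C]$ and apply $\rightarrow E$ twice, pairing this assumption with each of the two implications just built; a $\wedge I$ then combines the results into $(A\rightarrow C)\wedge (B\rightarrow C)$, and a final $\rightarrow I$ discharging $[A\vee B\rightarrow C]$ produces the axiom $\hat{D}$.

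With axiom $\hat{D}$ so derived, the induction on the structure of a Hilbert derivation carried out in Theorem~\ref{11} goes through verbatim, yielding $\Gamma\vdash_{N\!{\sf WF\hat{D}}} A$ whenever $\Gamma\vdash_{H{\sf WF\hat{D}}} A$. The only subtlety worth checking is the side condition $(\ddagger)$ on each use of $\rightarrow E$ inside the derivation of $\hat{D}$: the two major-premise subderivations, ending respectively in $((A\vee B)\rightarrow C)\rightarrow (A\rightarrow C)$ and $((A\vee B)\rightarrow C)\rightarrow (B\rightarrow C)$, must have no uncancelled assumptions. But this is exactly what the restriction on $\rightarrow_{\hat{\vee}}\!I$ guarantees, since the assumptions $[A]$ and $[B]$ used to form them are discharged by that very rule. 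Thus no real obstacle arises; the argument is a direct adaptation of Theorem~\ref{44} with $\hat{D}$ and $\vee$ playing the roles of $\hat{C}$ and $\wedge$.
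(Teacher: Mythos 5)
Your proposal is correct and matches the paper's intent exactly: the paper proves this theorem by saying it is ``similar to the proof of Theorem~\ref{44},'' and your derivation of axiom $\hat{D}$ via $\vee I$ followed by $\rightarrow_{\hat{\vee}}\!I$, then $\rightarrow E$, $\wedge I$, and a final $\rightarrow I$ is precisely the dual of the paper's derivation of $\hat{C}$. Your check of the side condition $(\ddagger)$ is also the right point to verify, and it holds for the reason you give.
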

\begin{proof}
The proof is similar to the proof of Theorem \ref{44}.
\end{proof}

\begin{thm}
If $ \Gamma  \vdash_{N\!{\sf WF\hat{D}}} A$ then $  \Gamma\vdash_{H{\sf WF\hat{D}}} A $.
\end{thm}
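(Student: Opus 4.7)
The plan is to proceed by structural induction on derivations in $N\!{\sf WF\hat{D}}$, exactly mirroring Theorems~\ref{12} and~\ref{444}. By Theorem~\ref{12}, every case whose last inference uses a rule already present in $N\wf$ goes through with $H\wf$ replaced by $H{\sf WF\hat{D}}$ (the stronger Hilbert system contains all axioms and rules of the weaker one). Consequently, the only genuinely new case is when the last rule applied is $\rightarrow_{\hat{\vee}}\!I$.

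So suppose $A$ has the form $(B\rightarrow C)\rightarrow(E\rightarrow C)$, inferred by $\rightarrow_{\hat{\vee}}\!I$ from a subderivation witnessing $E\vdash_{N\!{\sf WF\hat{D}}} B$, with $E$ the only uncancelled assumption. The induction hypothesis gives $E\vdash_{H{\sf WF\hat{D}}} B$, and the weak deduction theorem (Theorem~\ref{z3}) yields $\vdash_{H{\sf WF\hat{D}}} E\rightarrow B$.

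From $E\rightarrow B$ combined with $B\rightarrow B$ (axiom~8) via rule~11, I obtain $\vdash_{H{\sf WF\hat{D}}} E\vee B\rightarrow B$, and together with axiom~2 ($B\rightarrow E\vee B$) this yields $\vdash_{H{\sf WF\hat{D}}} B\leftrightarrow E\vee B$. Applying rule~13 with the trivial $\vdash_{H{\sf WF\hat{D}}} C\leftrightarrow C$ produces $\vdash_{H{\sf WF\hat{D}}} (B\rightarrow C)\leftrightarrow(E\vee B\rightarrow C)$. The axiom $\hat{D}$ gives $\vdash_{H{\sf WF\hat{D}}} (E\vee B\rightarrow C)\rightarrow(E\rightarrow C)\wedge(B\rightarrow C)$, and chaining through the equivalence, then projecting via axiom~3, I conclude $\vdash_{H{\sf WF\hat{D}}} (B\rightarrow C)\rightarrow(E\rightarrow C)$, i.e.\ $\vdash_{H{\sf WF\hat{D}}} A$, so $\Gamma\vdash_{H{\sf WF\hat{D}}} A$.

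The argument is entirely dual to the proof of Theorem~\ref{444}, with $\vee$ taking the role of $\wedge$ and axiom~3 the role of axiom~4 at the final projection. The only delicate point is the orientation of the auxiliary equivalence: one needs $B\leftrightarrow E\vee B$ rather than $E\leftrightarrow E\vee B$, so that when rule~13 substitutes into the antecedent of $B\rightarrow C$ a disjunction appears, to which the axiom $\hat{D}$ can be applied. No other substantive obstacle arises.
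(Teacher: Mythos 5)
Your proposal is correct and follows exactly the route the paper intends: the paper's own proof is simply ``similar to Theorem~\ref{444},'' and your argument is the precise dualization of that proof, with $B\leftrightarrow E\vee B$ fed into the antecedent via rule~13, the axiom $\hat{D}$ replacing $\hat{C}$, and axiom~3 replacing axiom~4 for the final projection. No discrepancies.
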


\begin{proof}
The proof is similar to the proof of Theorem \ref{444}.
\end{proof}

\subsection{Natural Deduction system for ${\sf WFC}$}

If we add the following rule to the natural deduction system for \wf, we obtain the natural deduction system for ${\sf WFC}$:
$$ \vlderivation {
\vliin{}{\rightarrow_{\wedge}\!I}{A\rightarrow B\wedge C}{
\vlhy {{
			\begin{matrix}	
					\mathcal{D}_{0} \\
				A\rightarrow B
			\end{matrix}
			}}}
{
\vlhy {{
			\begin{matrix}	
					\mathcal{D}_{1} \\
			A\rightarrow C
			\end{matrix}
			}}}}$$

The following theorems show that the Hilbert-style proof system and the natural deduction proof system of subintuitionistic  logic ${\sf WFC}$ are equivalent.

\begin{thm}\label{34}
If $ \Gamma  \vdash_{H{\sf WFC}}A$ then $  \Gamma\vdash_{N\!{\sf WFC}} A $.
\end{thm}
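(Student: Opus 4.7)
The plan is to mirror the strategy already used in Theorems \ref{22} and \ref{44}. Since Theorem \ref{11} translates every derivation in $H{\sf WF}$ into $N{\sf WF}$, and since $N{\sf WFC}$ is obtained from $N{\sf WF}$ by the addition of the rule $\rightarrow_{\wedge}\!I$, that same translation still lives inside $N{\sf WFC}$. Consequently, the only thing left to check is that the one extra axiom scheme of $H{\sf WFC}$, namely $(A\rightarrow B)\wedge (A\rightarrow C)\rightarrow(A\rightarrow B\wedge C)$, is derivable in $N{\sf WFC}$.

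To produce such a derivation I would start from the single hypothesis $[(A\rightarrow B)\wedge (A\rightarrow C)]^{1}$, apply $\wedge E$ twice to obtain the two branches $A\rightarrow B$ and $A\rightarrow C$, combine them with the new rule $\rightarrow_{\wedge}\!I$ to reach $A\rightarrow B\wedge C$, and finally discharge assumption $1$ with $\rightarrow\!I$ to obtain the target implication. This is the natural analogue, for the rule $\rightarrow_{\wedge}\!I$, of the short derivations used to justify axiom $\hat{C}$ in Theorem \ref{44} and axiom ${\sf N}$ in Theorem \ref{22}.

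The only subtle point is the side condition $(\star)$ on $\rightarrow\!I$, which permits the rule to fire only when the discharged formula is the sole open hypothesis at the moment of discharge. In the tree above this is automatic: both leaves are copies of the same assumption $(A\rightarrow B)\wedge (A\rightarrow C)$, and neither $\wedge E$ nor $\rightarrow_{\wedge}\!I$ opens any new hypotheses. Beyond this piece of bookkeeping there is no genuine obstacle, and the theorem follows immediately by combining Theorem \ref{11} with the short tree just described.
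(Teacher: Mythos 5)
Your proposal is correct and coincides with the paper's own proof: the paper likewise reduces the problem to Theorem~\ref{11} and then derives axiom ${\sf C}$ by exactly the tree you describe ($\wedge E$ twice on the single assumption, $\rightarrow_{\wedge}\!I$, then $\rightarrow\!I$ discharging it). Your extra remark about the side condition $(\star)$ being satisfied is accurate bookkeeping that the paper leaves implicit.
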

\begin{proof}
By Theorem \ref{11}, we only need to show that the axiom {\sf C}  can be deduced in the  natural deduction proof system of ${\sf WFC}$:
$$\frac{\frac{\frac{[(A\rightarrow B)\wedge (A\rightarrow C)]^{1}}{A\rightarrow B}\wedge E~~~~\frac{[(A\rightarrow B)\wedge (A\rightarrow C)]^{1}}{A\rightarrow C}\wedge E}{A\rightarrow B\wedge C}\rightarrow_{\wedge}\!I}{(A\rightarrow B)\wedge (A\rightarrow C)\rightarrow (A\rightarrow B\wedge C)}\rightarrow I^{1} $$
\end{proof}

\begin{thm}\label{35}
If $ \Gamma  \vdash_{N\!{\sf WFC}} A$ then $  \Gamma\vdash_{H{\sf WFC}} A $.
\end{thm}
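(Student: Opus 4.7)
The plan is to proceed by structural induction on the derivation of $A$ from $\Gamma$ in the natural deduction system of ${\sf WFC}$, following the template already established by Theorems \ref{12}, \ref{33}, and \ref{444}. All cases in which the final inference is by a rule of the $\wf$ natural deduction system are handled exactly as in Theorem \ref{12}, with $H\wf$ replaced by $H{\sf WFC}$ throughout (every $\wf$ axiom and rule remains available in $H{\sf WFC}$). Thus the only genuinely new case is when the last rule applied is $\rightarrow_{\wedge}\!I$.

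In that case $A$ has the form $E \rightarrow B \wedge C$ for some formulas $E, B, C$, and the derivation has immediate subderivations witnessing $\Gamma \vdash_{N{\sf WFC}} E \rightarrow B$ and $\Gamma \vdash_{N{\sf WFC}} E \rightarrow C$. Applying the induction hypothesis to both gives $\Gamma \vdash_{H{\sf WFC}} E \rightarrow B$ and $\Gamma \vdash_{H{\sf WFC}} E \rightarrow C$. The conjunction rule (rule 12 of Figure \ref{fig:axioms:wf}) then yields $\Gamma \vdash_{H{\sf WFC}} (E \rightarrow B) \wedge (E \rightarrow C)$. Since the axiom ${\sf C}$ furnishes $\vdash_{H{\sf WFC}} (E \rightarrow B) \wedge (E \rightarrow C) \rightarrow (E \rightarrow B \wedge C)$ as a pure theorem, modus ponens is legitimate and produces $\Gamma \vdash_{H{\sf WFC}} E \rightarrow B \wedge C$, as required.

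The one subtle point to verify is that this final use of MP respects the restriction of Definition \ref{hh}, which allows modus ponens only when the derivation of the implication has no open assumptions. Here the implication is an instance of axiom ${\sf C}$, derived without any hypotheses, so the side condition is satisfied automatically. No further obstacle arises: the argument is a direct analogue of Theorem \ref{35}, and it differs from the ${\sf WF\hat{C}}$ case of Theorem \ref{444} in being more immediate, because axiom ${\sf C}$ delivers the required implication in one step rather than through a chain involving rule 13.
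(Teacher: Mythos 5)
Your proposal is correct and follows essentially the same route as the paper: induction reducing to the $\rightarrow_{\wedge}\!I$ case via Theorem~\ref{12}, then the conjunction rule, axiom ${\sf C}$, and MP. Your explicit check that the MP restriction of Definition~\ref{hh} is satisfied (since the instance of ${\sf C}$ is a theorem derived without assumptions) is a point the paper leaves implicit, but the argument is the same.
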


\begin{proof}
The proof is by induction on the complexity of $ A $ from $\Gamma  $. By Theorem \ref{12}, we only need to prove the case where the last derivation is by the rule $\rightarrow_{\wedge}\!I$. 

Assume $ A=E\rightarrow B\wedge C$, $ \Gamma  \vdash_{N}A$ and the last derivation is by the rule $\rightarrow_{\wedge}\!I$. This implies that $ \Gamma\vdash_{N{\sf WFC}} E\rightarrow B$ and  $ \Gamma\vdash_{N{\sf WFC}} E\rightarrow C$. So, by the induction hypothesis we have $\Gamma\vdash_{H{\sf WFC}} A\rightarrow B $ and $\Gamma\vdash_{H{\sf WFC}} A\rightarrow C$. Then we conclude that $ \Gamma\vdash_{H{\sf WFC}} (E\rightarrow   B)\wedge  (E\rightarrow C)$. By axiom {\sf C} we have $ \vdash_{H{\sf WFC}}(E\rightarrow   B)\wedge  (E\rightarrow C)\rightarrow ( E\rightarrow B\wedge C)$.  Hence, by the MP rule, we conclude $ \Gamma\vdash_{H{\sf WFC}} E\rightarrow B\wedge C $. Therefore, $\Gamma\vdash_{H{\sf WFC}}  A $.
\end{proof}

\subsection{Natural Deduction system for ${\sf WFD}$}

If we add the following rule to the natural deduction system for \wf, we obtain the natural deduction system for ${\sf WFD}$:
$$ \vlderivation {
\vliin{}{\rightarrow_{\vee}\!I}{A\vee B\rightarrow  C}{
\vlhy {{
			\begin{matrix}	
					\mathcal{D}_{0} \\
				A\rightarrow C
			\end{matrix}
			}}}
{
\vlhy {{
			\begin{matrix}	
					\mathcal{D}_{1} \\
			B\rightarrow C
			\end{matrix}
			}}}}$$
The following theorems show that the Hilbert-style proof system and the natural deduction proof system of subintuitionistic  logic ${\sf WFD}$ are equivalent.
\begin{thm}
If $ \Gamma  \vdash_{H{\sf WFD}}A$ then $  \Gamma\vdash_{N\!{\sf WFD}} A $.
\end{thm}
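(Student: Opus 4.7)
The plan is to follow exactly the same strategy as in the proofs of Theorems~\ref{44} and~\ref{34}: invoke Theorem~\ref{11} to cover every axiom and rule inherited from $\wf$, so that the task reduces to exhibiting a natural deduction proof in ${\sf WFD}$ of the single new axiom schema {\sf D}, namely $(A\rightarrow C)\wedge (B\rightarrow C)\rightarrow (A\vee B\rightarrow C)$, using the new rule $\rightarrow_{\vee}\!I$.

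The derivation I have in mind is completely parallel to the one given for axiom {\sf C} in the proof of Theorem~\ref{34}, simply with the roles of the two sides exchanged. I would open $(A\rightarrow C)\wedge (B\rightarrow C)$ as the unique hypothesis, apply $\wedge E$ to its left conjunct to obtain $A\rightarrow C$, apply $\wedge E$ to its right conjunct to obtain $B\rightarrow C$, feed the two resulting subderivations into the new rule $\rightarrow_{\vee}\!I$ to conclude $A\vee B\rightarrow C$, and finally discharge the conjunction with $\rightarrow I$ to produce the axiom.

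The only point that needs checking is the side condition $(\star)$ on $\rightarrow I$ at the root: it requires that $(A\rightarrow C)\wedge (B\rightarrow C)$ be the only open assumption of the subderivation of $A\vee B\rightarrow C$. This is immediate, since both uses of $\wedge E$ draw on copies of the very same formula and the rule $\rightarrow_{\vee}\!I$ introduces no further hypotheses of its own. I do not anticipate a genuine obstacle here; the proof is essentially a one-line adaptation of the WFC argument, and the statement of the theorem mirrors Theorem~\ref{34} almost verbatim.
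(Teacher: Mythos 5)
Your proposal is correct and matches the paper's intended argument: the paper simply states that the proof is ``similar to the proof of Theorem~\ref{34}'', and the derivation you describe --- discharging $(A\rightarrow C)\wedge(B\rightarrow C)$ after two applications of $\wedge E$ feeding into $\rightarrow_{\vee}\!I$ --- is precisely the analogue of the displayed derivation of axiom {\sf C} there. Your check of the side condition $(\star)$ is also accurate, so nothing further is needed.
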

\begin{proof}
The proof is similar to the proof of Theorem \ref{34}.
\end{proof}

\begin{thm}
If $ \Gamma  \vdash_{N\!{\sf WFC}} A$ then $  \Gamma\vdash_{H{\sf WFD}} A $.
\end{thm}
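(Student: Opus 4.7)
The statement as written has ${\sf WFC}$ on the natural-deduction side, which is plainly a typo for ${\sf WFD}$ in light of the subsection heading and the parallel with Theorem~\ref{35} in the ${\sf WFC}$ subsection. Reading the claim as: if $\Gamma \vdash_{N\!{\sf WFD}} A$ then $\Gamma \vdash_{H{\sf WFD}} A$, the plan is structural induction on the natural-deduction derivation, exactly mirroring the pattern of Theorem~\ref{35}. By Theorem~\ref{12}, every case in which the last rule used is one already belonging to the natural-deduction system for \wf is handled by the argument given for \wf. The only genuinely new case is when the last rule is $\rightarrow_{\vee}\!I$, so that is where the entire substance of the proof lies.

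For this case, suppose $A$ has the form $E \vee B \rightarrow C$ and is obtained by applying $\rightarrow_{\vee}\!I$ to subderivations witnessing $\Gamma \vdash_{N\!{\sf WFD}} E \rightarrow C$ and $\Gamma \vdash_{N\!{\sf WFD}} B \rightarrow C$. Invoking the induction hypothesis on both subderivations gives $\Gamma \vdash_{H{\sf WFD}} E \rightarrow C$ and $\Gamma \vdash_{H{\sf WFD}} B \rightarrow C$, and then the conjunction rule (rule~12 of Figure~\ref{fig:axioms:wf}), which is unrestricted even in the presence of assumptions, yields $\Gamma \vdash_{H{\sf WFD}} (E \rightarrow C) \wedge (B \rightarrow C)$. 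Axiom ${\sf D}$ is precisely $(E \rightarrow C) \wedge (B \rightarrow C) \rightarrow (E \vee B \rightarrow C)$, so a single application of modus ponens produces $\Gamma \vdash_{H{\sf WFD}} E \vee B \rightarrow C$, which is the desired conclusion.

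The one subtlety worth checking is the side condition on MP imposed by Definition~\ref{hh}, namely that the implication consumed by MP must itself be derivable with no open assumptions. Here the implication is axiom ${\sf D}$, which is a theorem of $H{\sf WFD}$, so the restricted MP does apply even though $\Gamma$ may be non-empty. All other cases of the induction are inherited verbatim from Theorem~\ref{12}; no additional obstacle arises, and the only substantive step is matching the natural-deduction rule $\rightarrow_{\vee}\!I$ with axiom ${\sf D}$ via the conjunction rule together with a single restricted MP.
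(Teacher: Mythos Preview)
Your proposal is correct and follows essentially the same approach as the paper, which merely states that the proof is similar to that of Theorem~\ref{35}: structural induction on the natural-deduction derivation, with only the new $\rightarrow_{\vee}\!I$ case requiring the conjunction rule followed by an application of MP with axiom~${\sf D}$. Your explicit verification of the side condition on MP is a welcome addition but does not alter the overall strategy.
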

\begin{proof}
The proof is similar to the proof of Theorem \ref{35}.
\end{proof}

\subsection{Natural Deduction system for ${\sf WFI}$}

If we consider the rule $\rightarrow_{tr}\!I$  instead of $ \rightarrow I_{1} $ and $ \rightarrow I_{2} $ in the natural deduction system for \wf, then we obtain the natural deduction system for ${\sf WFI}$.
$$ \vlderivation {
\vliin{}{\rightarrow_{tr}\!I}{A\rightarrow  C}{
\vlhy {{
			\begin{matrix}	
					\mathcal{D}_{0} \\
				A\rightarrow B
			\end{matrix}
			}}}
{
\vlhy {{
			\begin{matrix}	
					\mathcal{D}_{1} \\
			B\rightarrow C
			\end{matrix}
			}}}}$$
The following theorems show that the Hilbert-style proof system and the natural deduction proof system of subintuitionistic  logic ${\sf WFI}$ are equivalent.
\begin{thm}
If $ \Gamma  \vdash_{H{\sf WFI}}A$ then $  \Gamma\vdash_{N\!{\sf WFI}} A $.
\end{thm}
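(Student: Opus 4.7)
The plan is to mirror the strategy already used for Theorem~\ref{34} in the subsection on ${\sf WFC}$. Theorem~\ref{11} already shows that all axioms and rules of ${\sf WF}$ can be derived in its natural deduction system, and since the natural deduction system for ${\sf WFI}$ extends that of ${\sf WF}$ with the single additional rule $\rightarrow_{tr}\!I$, those derivations lift verbatim to $N\!{\sf WFI}$. So the only real content of the theorem is to show that the extra axiom schema
$$({\sf I})\qquad (A\rightarrow B)\wedge (B\rightarrow C)\rightarrow(A\rightarrow C)$$
is derivable in $N\!{\sf WFI}$.

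To construct such a derivation, I would start with the single assumption $[(A\rightarrow B)\wedge (B\rightarrow C)]^{1}$. Two applications of $\wedge E$ yield $A\rightarrow B$ and $B\rightarrow C$. An application of $\rightarrow_{tr}\!I$ with these as its two premises produces $A\rightarrow C$. At this point the only open assumption in the whole derivation is $(A\rightarrow B)\wedge (B\rightarrow C)$, so the side condition $(\star)$ on $\rightarrow\!I$ is satisfied and discharging it gives the desired formula, exactly as in the ${\sf WFC}$ case.

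The only point that requires any care is verifying the side conditions. The rule $\rightarrow_{tr}\!I$ is stated without a restriction on open assumptions in its premises, so feeding it the two outputs of $\wedge E$ is unproblematic; and the final $\rightarrow\!I$ discharges a single assumption, meeting $(\star)$. I do not expect any genuine obstacle: the argument is a direct adaptation of Theorem~\ref{34}, with $\rightarrow_{tr}\!I$ playing the structural role that $\rightarrow_{\wedge}\!I$ played there.
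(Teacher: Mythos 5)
Your derivation of the axiom ${\sf I}$ is exactly the one the paper intends: the paper's own proof only says it is ``similar to the proof of Theorem~\ref{34}'', and transcribing that proof for ${\sf WFI}$ gives precisely your derivation ($\wedge E$ twice on $[(A\rightarrow B)\wedge(B\rightarrow C)]^{1}$, then $\rightarrow_{tr}\!I$, then $\rightarrow\!I^{1}$, with the side conditions checked as you describe). So the substance of your argument coincides with the paper's.

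There is, however, one inaccurate step in your reduction to Theorem~\ref{11}. The natural deduction system for ${\sf WFI}$ is \emph{not} obtained by adding $\rightarrow_{tr}\!I$ to that of \wf: the paper defines it by taking $\rightarrow_{tr}\!I$ \emph{instead of} $\rightarrow\!I_{1}$ and $\rightarrow\!I_{2}$. Consequently the derivations of Theorem~\ref{11} do not all ``lift verbatim'': the derivation given there for rule 13 of the Hilbert system uses $\rightarrow\!I_{1}$ and $\rightarrow\!I_{2}$, which are absent from $N{\sf WFI}$, so you still owe a derivation showing that rule 13 remains admissible. This is easily repaired: from the theorems $B\rightarrow A$ and $C\rightarrow D$ and the single assumption $[A\rightarrow C]^{1}$, two applications of $\rightarrow_{tr}\!I$ yield $B\rightarrow D$, and $\rightarrow\!I^{1}$ then gives $(A\rightarrow C)\rightarrow(B\rightarrow D)$; the converse implication is symmetric, and $\wedge I$ finishes. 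With that supplement (a point the paper itself glosses over, since it reduces the ${\sf WFI}$ case to the ${\sf WFC}$ case, where the natural deduction system genuinely is an extension of that of \wf), your proof is complete.
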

\begin{proof}
The proof is similar to the proof of Theorem \ref{34}.
\end{proof}

\begin{thm}
If $ \Gamma  \vdash_{N\!{\sf WFI}} A$ then $  \Gamma\vdash_{H{\sf WFI}} A$.
\end{thm}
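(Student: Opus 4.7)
The plan is to mirror the argument of Theorem~\ref{35}, proceeding by induction on the structure of the derivation witnessing $\Gamma \vdash_{N\!{\sf WFI}} A$. Since the natural deduction system for ${\sf WFI}$ is obtained from that of \wf by deleting $\rightarrow\!I_1$ and $\rightarrow\!I_2$ and replacing them with $\rightarrow_{tr}\!I$, Theorem~\ref{12} already supplies the reasoning for every case whose last rule is shared with the \wf system. Thus the only genuinely new case is the one where the final rule applied is $\rightarrow_{tr}\!I$. Everything else is handled by appealing to the \wf induction and observing that each Hilbert-derivation in the \wf system is, a fortiori, a Hilbert-derivation in ${\sf WFI}$.

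For the new case, suppose $A = E \rightarrow C$ and the last step of the derivation is an application of $\rightarrow_{tr}\!I$. Then there exists a formula $B$ such that $\Gamma \vdash_{N\!{\sf WFI}} E \rightarrow B$ and $\Gamma \vdash_{N\!{\sf WFI}} B \rightarrow C$. By the induction hypothesis these both hold in the Hilbert system: $\Gamma \vdash_{H{\sf WFI}} E \rightarrow B$ and $\Gamma \vdash_{H{\sf WFI}} B \rightarrow C$. The conjunction rule (rule 12 of Figure~\ref{fig:axioms:wf}) is unrestricted in deductions from assumptions, so $\Gamma \vdash_{H{\sf WFI}} (E \rightarrow B) \wedge (B \rightarrow C)$.

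To close the argument, invoke axiom {\sf I}, which is a theorem with no assumptions: $\vdash_{H{\sf WFI}} (E \rightarrow B) \wedge (B \rightarrow C) \rightarrow (E \rightarrow C)$. Because the implication has an empty set of open assumptions, the restricted modus ponens of Definition~\ref{hh} is applicable, yielding $\Gamma \vdash_{H{\sf WFI}} E \rightarrow C$, which is what we wanted.

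The only subtlety to watch for is the side condition on MP in the Hilbert calculus for deductions from assumptions: the premise $A \to B$ of MP must be provable without using assumptions from $\Gamma$. In our case this condition is automatically satisfied because the implication used in the MP step is exactly the axiom {\sf I}, hence a theorem. No other obstacle arises: the remaining inductive cases are already covered by Theorem~\ref{12}, so the argument is essentially identical in structure to Theorem~\ref{35}, with axiom {\sf I} playing the role that axiom {\sf C} played there.
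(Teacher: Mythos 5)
Your proposal is correct and follows essentially the same route as the paper: the paper's proof simply says it is similar to the proof of Theorem~\ref{35}, and your argument is exactly that adaptation — reduce to the single new case $\rightarrow_{tr}\!I$ via Theorem~\ref{12}, combine the two inductive conclusions with the conjunction rule, and discharge via axiom {\sf I} and the restricted modus ponens, whose side condition holds because the implication is an axiom instance. Your explicit remark that the $\rightarrow\!I_1$, $\rightarrow\!I_2$ cases drop out (since those rules are replaced, not merely supplemented) is a correct and slightly more careful reading than the paper makes explicit.
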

\begin{proof}
The proof is similar to the proof of Theorem \ref{35}.
\end{proof}

\subsection{Natural Deduction system for ${\sf F}$}

If we add the rules $ \rightarrow_{\wedge}\!I $ and $ \rightarrow_{\vee}\!I $ to the natural deduction system of $ {\sf WFI} $, we obtain the natural deduction system for ${\sf F}$.
 
 \begin{thm}
 $ \Gamma  \vdash_{N\!\f} E$ iff $  \Gamma\vdash_{H{\sf F}} E $.
\end{thm}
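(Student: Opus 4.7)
The theorem follows the same two-direction template used throughout the paper, and the plan is to reduce it to a combination of the results already established for ${\sf WFI}$, ${\sf WFC}$, and ${\sf WFD}$, whose characteristic rules together constitute the only additions made on top of the natural deduction system for $\wf$ in order to obtain the system for $\f$.

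For the direction $\Gamma \vdash_{H\f} E \Rightarrow \Gamma \vdash_{N\f} E$, I would proceed exactly as in the proofs of Theorems~\ref{11}, \ref{34}, and the analogues for ${\sf WFI}$ and ${\sf WFD}$. By Theorem~\ref{11}, every axiom and rule of $\wf$ is derivable in $N\f$ (since all $\wf$ rules are retained, noting that $\rightarrow\!I_1$ and $\rightarrow\!I_2$ are replaced by $\rightarrow_{tr}\!I$, but the derivations of axioms in Theorem~\ref{11} do not use $\rightarrow\!I_1$ or $\rightarrow\!I_2$). It remains to derive the axioms {\sf I}, {\sf C}, and {\sf D} in $N\f$: the derivation of {\sf I} uses $\rightarrow_{tr}\!I$ applied to the two conjuncts $A\rightarrow B$ and $B\rightarrow C$ obtained from $[(A\rightarrow B)\wedge(B\rightarrow C)]$ by $\wedge E$, followed by $\rightarrow\!I$; the derivation of {\sf C} is exactly the one given in Theorem~\ref{34}; and the derivation of {\sf D} is its dual using $\rightarrow_\vee\!I$.

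For the direction $\Gamma \vdash_{N\f} E \Rightarrow \Gamma \vdash_{H\f} E$, I would do induction on the complexity of the derivation. Cases for the rules shared with $\wf$ are handled verbatim as in Theorem~\ref{12}. For the three new rules I combine the arguments already carried out: if the last rule is $\rightarrow_{tr}\!I$ concluding $A\rightarrow C$ from subderivations of $A\rightarrow B$ and $B\rightarrow C$, the induction hypothesis gives $\Gamma \vdash_{H\f} A\rightarrow B$ and $\Gamma \vdash_{H\f} B\rightarrow C$, so by the conjunction rule and axiom {\sf I} together with MP we obtain $\Gamma \vdash_{H\f} A\rightarrow C$; the cases for $\rightarrow_\wedge\!I$ and $\rightarrow_\vee\!I$ are the ones already treated in Theorem~\ref{35} and its ${\sf WFD}$ analogue, using axioms {\sf C} and {\sf D} respectively.

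There is no genuine obstacle; the only thing to check carefully is that the inductive argument for $\f$ does not require any cross-interaction between the three new rules beyond what the individual subsections already handle. This is the case because each case of the induction is determined solely by the last rule applied, so adding further rules to the system does not disturb the treatment of any previously handled case. Thus the proof is essentially a bookkeeping union of the proofs in the preceding subsections, and I would state it as such, referring back to Theorems~\ref{11}, \ref{12}, \ref{34}, and \ref{35} rather than reproducing the derivations in full.
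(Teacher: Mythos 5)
Your proposal takes essentially the same route as the paper: the paper's own proof is a one-line reduction observing that \f is \wf closed under {\sf I}, {\sf C} and {\sf D}, and then appealing to the already-established equivalences between the Hilbert and natural deduction systems for ${\sf WFI}$, ${\sf WFC}$ and ${\sf WFD}$. Your version merely spells out the two directions and the case analysis that the paper leaves implicit, so it is a more detailed rendering of the same argument.
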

\begin{proof}
Since the logic \f is the smallest set of formulas closed under instances of \wf, {\sf I}, {\sf C} and {\sf D} \cite{FD}, the proof of this theorem follows from the equivalence of the Hilbert-style proof systems and the natural deduction proof systems in subintuitionistic logics 
${\sf WFI}$, ${\sf WFC}$ and ${\sf WFD}$, which was previously established in earlier sections.
\end{proof}

 \section{Normalization}\label{binneighh}
In this section, we study the process of normalization for the natural deduction systems of the subintuitionistic logics that we introduced in the previous sections.

In the elimination rules, we differentiate between two types of premises: major and minor. In the conjunction elimination rule, the sole premise is considered major. For implication elimination, the premise 
$ A\rightarrow B $ is major, while $ A $ is minor. In the disjunction elimination rule, the disjunction $ A\vee B $, which is being eliminated, acts as the major premise, while the two premises identical to the conclusion are minor.
\begin{dfn}
A derivation is \textbf{normal} if every major premise of an elimination rule is either an assumption or the conclusion of an elimination rule different from the $( \elrule{\OR} )$ rule.
\end{dfn}

\begin{dfn}
A \textbf{cut} is a formula occurrence which is the major premise of an elimination rule and the conclusion of a $( \elrule{\OR} )$ rule or an introduction rule.
\end{dfn}

\begin{dfn}
The logical depth $dp(A)$ of a formula is defined inductively as follows:
the logical depth of a propositional variable or $ \bot $ is $  0$, while the logical depth of $ A\circ B $ is
$max(dp(A), dp(B)) + 1$. The rank $rk(A)$ of a formula $A$ will be defined as $dp(A) + 1$.

If $  \mathcal{D}$ is a derivation, then we define its cut rank to be $  0$, if it contains no cut formulas
(i.e., is cut free). If, on the other hand, $  \mathcal{D}$  does contain cuts, then we define its cut rank to
be the maximum of all the ranks of cut formulas in $  \mathcal{D}$. 
\end{dfn}

\begin{dfn}
A $ segment $ of length $ n $ in a derivation $\mathcal{D}$ is a sequence of formula occurrences $ \langle A_{1}, \dots,   A_{n}\rangle$ of the same formula $ A $ in $\mathcal{D}$ such that:

1. $ A_{1} $ is not the conclusion of an application of the $( \elrule{\OR} )$ rule,

2. $ A_{n} $ is not the minor premiser of the $( \elrule{\OR} )$ rule,

3. every $ A_{i} $, with $ i \lneqq n $ is a minor premise of a $( \elrule{\OR} )$ rule with conclusion $ A_{i+1} $.
\end{dfn}

The following theorems prove the normalization theorem for each of the natural deduction systems of the subintuitionistic logics that we introduced in the previous sections.

\begin{thm}\label{normal}
There is an effective procedure for transforming a natural deduction derivation of $ \Gamma\vdash A $ in subintuitionistic logic \wf into a normal one also showing  $ \Gamma\vdash A $.
\end{thm}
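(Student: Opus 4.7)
My plan is to follow the Prawitz-style normalization strategy, adapted to the restrictive rules of \wf. I introduce reduction rules that eliminate cuts---proper conversions for each introduction/elimination pair, and permutation conversions for the segments produced by $(\elrule{\vee})$---and then prove termination by induction on a measure built from the cut rank.

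The proper conversions for $\wedge$ and $\vee$ are the standard Prawitz reductions. The usual $\intrule{\IMP}/\elrule{\IMP}$ conversion substitutes the minor-premise derivation of $A$ into the discharged leaves $[A]$ of the subderivation yielding $A \IMP B$; thanks to $(\star)$, that subderivation has $[A]$ as its only assumption, so the graft is clean. The new cases are the conversions for $\intrule{\IMP}_1$ and $\intrule{\IMP}_2$. For $\intrule{\IMP}_2$: if the major premise $(B \IMP A)\IMP (D \IMP A)$ comes from subderivations $\mathcal{D}_1\colon B \vdash D$ and $\mathcal{D}_2\colon D \vdash B$, and the cut provides a closed minor-premise derivation $\mathcal{D}_0$ of $B \IMP A$ (closed by $(\ddagger)$), the cut reduces to the derivation of $D \IMP A$ obtained by opening $[D]$, applying $\mathcal{D}_2$ to derive $B$, applying $\elrule{\IMP}$ with $\mathcal{D}_0$ to derive $A$, and finally closing $[D]$ by $\intrule{\IMP}$; the $\intrule{\IMP}_1$ case is symmetric and uses $\mathcal{D}_1$. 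The permutation conversions for $(\elrule{\vee})$ are the usual ones: whenever the conclusion of an $(\elrule{\vee})$ is the major premise of some elimination, that elimination permutes upward into both minor-premise subderivations, thereby shortening the $\vee$-segment.

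The critical step, and the main obstacle, is verifying that each reduction preserves the side conditions $(\star)$, $(\ddagger)$, and $(\dagger)$, which is where \wf diverges from intuitionistic logic. For the standard $\IMP$-conversion this is immediate from $(\star)$: no new free assumption can invade a downstream $(\elrule{\IMP})$ major-premise subderivation. For the $\intrule{\IMP}_1,\intrule{\IMP}_2$ conversions, the closedness of $\mathcal{D}_0$ granted by $(\ddagger)$ is precisely what licenses the concluding $\intrule{\IMP}$ in the reduct (whose sole open assumption is the freshly opened $[D]$), while $(\dagger)$ guarantees that $\mathcal{D}_1$ or $\mathcal{D}_2$ contributes no assumption beyond its designated one. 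For the permutations, the side conditions on the descended elimination rule are preserved since passing into an $(\elrule{\vee})$ branch only adds discharged assumptions. These checks are routine but must be performed case by case, and they are what justifies that the outputs of the conversions remain legal \wf-derivations.

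Termination then follows by a lexicographic double induction on $(\rho(\mathcal{D}), n(\mathcal{D}))$, where $\rho(\mathcal{D})$ is the cut rank defined in the excerpt and $n(\mathcal{D})$ is the number of cut formulas of maximal rank. Selecting a topmost cut of maximal rank and firing the appropriate conversion, any cut exposed or created in the reduct has strictly smaller rank than the eliminated cut: in the standard $\IMP$-case the new potential cuts involve only subformulas of $A \IMP B$; in the $\intrule{\IMP}_1, \intrule{\IMP}_2$ cases the one newly created cut, on $B \IMP A$ (resp.\ $A \IMP B$), has rank strictly less than that of $(B \IMP A)\IMP (D \IMP A)$; in the permutations the segment strictly shortens. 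Hence the measure strictly decreases at each step, and iteration yields, after finitely many reductions, a normal derivation of $\Gamma \vdash A$ with the same open assumptions and conclusion.
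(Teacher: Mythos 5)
Your overall architecture (standard Prawitz conversions plus new conversions for $\intrule{\IMP}_1,\intrule{\IMP}_2$, termination by induction on cut rank) matches the paper's, but the step you yourself flag as critical contains an error. You justify the closedness of the minor-premise derivation $\mathcal{D}_0$ of $B\IMP A$ ``by $(\ddagger)$''; however, $(\ddagger)$ constrains only the derivation of the \emph{major} premise of $\elrule{\IMP}$, which in the cut configuration is $(B\IMP A)\IMP(D\IMP A)$ (and is indeed closed, by $(\dagger)$). Nothing forces the minor-premise derivation of $B\IMP A$ to be closed: it can be an undischarged leaf from $\Gamma$, or stand under open assumptions via $\elrule{\AND}$. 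In that case your reduct is not a legal \wf-derivation --- the new internal $\elrule{\IMP}$ with major premise $B\IMP A$ violates $(\ddagger)$, and the concluding $\intrule{\IMP}$ violates $(\star)$ because $D$ is no longer the only open assumption. The paper avoids this by treating only the configuration in which the minor premise is itself the conclusion of an $\intrule{\IMP}$ from $\mathcal{D}_2\colon [A]\vdash B$ (so closedness comes from $(\star)$), and by grafting: it places $\mathcal{D}_0\colon B\vdash D$ directly below $\mathcal{D}_2$ and closes $[A]$ with a single $\intrule{\IMP}$, creating no new $\elrule{\IMP}$ at all. Your conversion is genuinely more general when $\mathcal{D}_0$ is closed (it does not care how $B\IMP A$ was derived, and the one new cut it creates has strictly smaller rank), but you must either prove that the open-minor-premise configuration cannot occur at a cut, or restrict to the paper's configuration and handle the remaining shapes of the minor premise separately.

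A secondary point: your termination measure $(\rho(\mathcal{D}),n(\mathcal{D}))$, with $n$ the number of maximal-rank cut formulas, does not strictly decrease under the permutation conversions, since permuting an elimination above an $\elrule{\OR}$ duplicates its major premise into both branches and can increase $n$. This is why the paper measures the sum of the \emph{lengths of the segments} containing a rank-$d$ cut formula (and normalizes the rightmost such cut after moving major premises to the left); you should replace $n$ by that quantity, or by a comparable segment-sensitive measure, for the lexicographic induction to go through.
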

\begin{proof}
Assume we have a derivation with cut rank $ d $ in which all major premises in elimination rules occur to the left of the minor premises. Let $ l $ be the sum of the lengths of all the segments containing a cut formula of rank $ d $. We look at the rightmost cut formula of rank $ d $. All cases are treated as for intuitionistic logic (see, \cite{Tr}), except  for the case where we introduce an implication by $ \intrule{\IMP}_i $ which is then eliminated by $ \elrule{\IMP} $. In this case we are looking at a situation like this:
			
		$$\vliinf{}{\elrule{\IMP}}{A\IMP D}{
				\begin{matrix}
					~\\
					\vliiinf{}{\intrule{\IMP}_1}{(A \IMP B)\IMP (A\IMP D)}{
			\begin{matrix}
				[B]\\
				\mathcal{D}_{0} \\
				D
			\end{matrix}
		}{\qquad}{
		\begin{matrix}
			[D]\\
			\mathcal{D}_{1} \\
			B
			\end{matrix}
		}
				\end{matrix}
			}{ 
				\begin{matrix}
					~\\
					\vlinf{}{\intrule{\IMP}}{A \IMP B}{
			\begin{matrix}
				[A]\\
				\mathcal{D}_{2} \\
				B
			\end{matrix}
			}
				\end{matrix}
			}$$

where the cut rank of $  \mathcal{D}_{2}  $ is strictly smaller than $ d $. 

So, we replace this by:
	
$$\vlinf{}{\intrule{\IMP}}{A \IMP D}{
			\begin{matrix}
				[A]\\
			\mathcal{D}_{2} \\
				B\\
				\mathcal{D}_{0}\\
				D
			\end{matrix}
			}$$
In this case we reduce the number of cut formulas with rank $ d $. Hence, we conclude that after repeated application of these steps we end up with a normal derivation.
\end{proof}

 \begin{thm}\label{normm}
There is an effective procedure for transforming a natural deduction derivation of $ \Gamma\vdash A $ in subintuitionistic logic $ {\sf WF_{N}} $ into a normal one also showing  $ \Gamma\vdash A $.
\end{thm}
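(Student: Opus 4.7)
The plan is to extend the proof of Theorem~\ref{normal} by adding exactly one new reduction, arising when a formula introduced by $\rightarrow_{N}\!I$ is immediately eliminated by $\rightarrow\!E$. The outer structure of the argument is identical to that of Theorem~\ref{normal}: we induct on the pair (cut rank $d$, sum $l$ of lengths of segments containing rank-$d$ cut formulas), always target the rightmost rank-$d$ cut, and check that the chosen reduction step does not create rank-$d$ cuts to its right. All reductions already treated in Theorem~\ref{normal} (those for $\wedge$, $\vee$, $\rightarrow\!I$, $\rightarrow\!I_1$, $\rightarrow\!I_2$, and $\bot$) transfer verbatim, so only the new case needs attention.

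The new cut has the following shape: the major premise of $\rightarrow\!E$ is $(A\rightarrow B)\rightarrow(C\rightarrow D)$, introduced by $\rightarrow_{N}\!I$ from four sub-derivations $\mathcal{D}_0\colon [A]\vdash C\vee B$, $\mathcal{D}_1\colon [D]\vdash B$, $\mathcal{D}_2\colon [C]\vdash A\vee D$, and $\mathcal{D}_3\colon [B]\vdash D$; the minor premise is a derivation $\mathcal{D}_4$ of $A\rightarrow B$ with no open assumptions, by side condition $(\ddagger)$. The contracted derivation uses only $\mathcal{D}_2$ and $\mathcal{D}_3$ (discarding $\mathcal{D}_0$ and $\mathcal{D}_1$) and proceeds as follows: from $[C]$, apply $\mathcal{D}_2$ to obtain $A\vee D$; apply $\vee E$, whose left branch starts from $[A]$, uses $\rightarrow\!E$ on $\mathcal{D}_4$ and $[A]$ to get $B$, and substitutes that derivation for each open $[B]$ in $\mathcal{D}_3$ to reach $D$, and whose right branch takes $[D]$ and yields $D$ trivially; finally, $\rightarrow\!I$ discharges $[C]$ to produce $C\rightarrow D$.

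The side conditions on the new rule applications are satisfied: $\mathcal{D}_4$ is closed, so the new $\rightarrow\!E$ satisfies $(\ddagger)$; and in the sub-derivation of $D$ the only open assumption is $[C]$, because $[A]$ and $[D]$ are discharged by the enclosing $\vee E$, so the final $\rightarrow\!I$ satisfies $(\star)$. The only new major-premise occurrence in an elimination rule is the $A\rightarrow B$ at the top of the new $\rightarrow\!E$; its rank is strictly below $d=rk((A\rightarrow B)\rightarrow(C\rightarrow D))$. Hence no rank-$d$ cut is created by this step, and the rightmost rank-$d$ measure used in Theorem~\ref{normal} strictly decreases. Iterating, together with the reductions of Theorem~\ref{normal}, yields a normal derivation of $\Gamma\vdash A$.

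The main subtlety is the legality of substituting the derivation of $B$ for every leaf $[B]$ in $\mathcal{D}_3$: this injects $[A]$ as a fresh open assumption in place of $[B]$, and one must verify that this $[A]$ does not violate a side condition of any rule lying between the replaced leaf and the root of $\mathcal{D}_3$. Each of $\rightarrow\!I$, $\rightarrow\!I_1$, $\rightarrow\!I_2$, $\rightarrow_{N}\!I$, and the major-premise position of $\rightarrow\!E$ already forbids additional open assumptions in its governed sub-derivation, so no $[B]$-leaf of $\mathcal{D}_3$ can be strictly inside such a sub-derivation; the substitution only traverses unrestricted rule applications, and each $[A]$ that escapes a substituted leaf is absorbed by the outer $\vee E$.
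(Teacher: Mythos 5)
Your proposal is correct and follows essentially the same route as the paper: the new $\rightarrow_{N}\!I$/$\rightarrow\!E$ cut is contracted by discarding $\mathcal{D}_0$ and $\mathcal{D}_1$, deriving $A\vee D$ from $[C]$ via $\mathcal{D}_2$, applying $\vee E$ with $\mathcal{D}_3$ grafted onto the left branch and a trivial right branch, and discharging $[C]$ by $\rightarrow\!I$. The only differences are minor: the paper unpacks the minor premise's final $\rightarrow\!I$ and grafts its subderivation $[A]\vdash B$ directly, whereas you keep the derivation of $A\rightarrow B$ intact and insert a fresh $\rightarrow\!E$ (creating only a strictly lower-rank cut) — note that the closedness of that derivation comes from the $(\star)$ condition on its final $\rightarrow\!I$ rather than from $(\ddagger)$, which constrains the major premise — and your explicit check that substituting for the $[B]$-leaves of $\mathcal{D}_3$ respects all side conditions makes precise a point the paper leaves implicit.
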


\begin{proof}
By the proof of Theorem \ref{normal}, we only need to consider the case where we introduce an implication by $\rightarrow_{N}\!I$ which is then eliminated by $ \elrule{\IMP} $. In this case we are looking at a situation like this:
	
	$$ \vlderivation {
\vliin{}{\rightarrow E}{C\rightarrow D}{
\vlhy {\vlderivation {
\vliiiin{}{\rightarrow_{N}\!I}{(A\rightarrow B)\rightarrow (C\rightarrow D)}{
\vlhy {{
		\begin{matrix}
				[A]\\
			\mathcal{D}_{0} \\
			C\vee B
		\end{matrix}
		}}}
{
\vlhy {{
		\begin{matrix}
				[D]\\
			\mathcal{D}_{1} \\
			B
		\end{matrix}
		}}}
{
\vlhy {{
		\begin{matrix}
				[C]\\
			\mathcal{D}_{2} \\
			A\vee D
		\end{matrix}
		}}}
{
\vlhy {{
		\begin{matrix}
				[B]\\
			\mathcal{D}_{3} \\
			D
		\end{matrix}
		}}}}}}
{
\vlhy {\vlderivation {
\vlin{}{\rightarrow I}{A\rightarrow B}{
\vlhy {{
		\begin{matrix}
				[A]\\
			\mathcal{D}_{4} \\
			B
		\end{matrix}
		}}}}}}}$$

	where the cut rank of $  \mathcal{D}_{4}  $ is strictly smaller than $ d $. So, we replace this by:
	
	$$ \vlderivation {
\vlin{}{\rightarrow I}{C\rightarrow D}{
\vlhy {\vlderivation {
\vliiin{}{\vee E}{D}{
\vlhy {{
		\begin{matrix}
	\\
	\\
				[C]\\
			\mathcal{D}_{2} \\
			A\vee D
		\end{matrix}
		}}}
{
\vlhy {{
		\begin{matrix}
				[A]\\
			\mathcal{D}_{4} \\
			B\\
			\mathcal{D}_{3} \\
			D
		\end{matrix}
		}}}
{
\vlhy {{
		\begin{matrix}
	\\
	\\
				[D]\\
			\mathcal{D} \\
			D
		\end{matrix}
		}}}}}}}$$
In this case we reduce the number of cut formulas with rank $ d $. Hence, we conclude that after repeated application of these steps we end up with a normal derivation.
\end{proof}

\begin{thm}
There is an effective procedure for transforming a natural deduction derivation of $ \Gamma\vdash A $ in subintuitionistic logic $ {\sf WF_{N_{2}}} $ into a normal one also showing  $ \Gamma\vdash A $.
\end{thm}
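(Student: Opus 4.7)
The plan is to mimic the strategy of Theorems~\ref{normal} and~\ref{normm} verbatim, reducing the proof to a single new cut case. All the reductions for the connectives of \wf are already handled by Theorem~\ref{normal}, so the only novelty in ${\sf WF_{N_{2}}}$ is the rule $\rightarrow_{N_{2}}\!I$. Consequently, I only need to identify and eliminate the cut that arises when a formula $(A\rightarrow B)\rightarrow (C\rightarrow D)$ is first introduced by $\rightarrow_{N_{2}}\!I$ and then immediately used as the major premise of an $\rightarrow E$ step.

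Concretely, the offending configuration looks like
$$
\vliinf{}{\elrule{\IMP}}{C\IMP D}{
    \vliinf{}{\rightarrow_{N_{2}}\!I}{(A\IMP B)\IMP (C\IMP D)}{
        \begin{matrix}[C]\\ \mathcal{D}_{0}\\ A\vee D\end{matrix}
    }{
        \begin{matrix}[B]\\ \mathcal{D}_{1}\\ D\end{matrix}
    }
}{
    \vlinf{}{\intrule{\IMP}}{A\IMP B}{
        \begin{matrix}[A]\\ \mathcal{D}_{2}\\ B\end{matrix}
    }
}
$$
where, by the side condition $(\ddagger)$ on $\rightarrow E$, the subderivation of $A\rightarrow B$ has no open assumptions, so we may assume its last step is $\rightarrow I$ from some $\mathcal{D}_{2} : [A]\vdash B$, with $\mathcal{D}_{2}$ of cut rank strictly below $d$. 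The proposed replacement builds $C\rightarrow D$ directly, bypassing the implication $(A\rightarrow B)\rightarrow (C\rightarrow D)$: assume $C$, run $\mathcal{D}_{0}$ to obtain $A\vee D$, apply $\vee E$ whose $[A]$-branch is $\mathcal{D}_{2}$ followed by $\mathcal{D}_{1}$ (yielding $D$) and whose $[D]$-branch is the trivial derivation, and finally discharge $C$ with $\rightarrow I$:
$$
\vlinf{}{\intrule{\IMP}}{C\IMP D}{
    \vliiinf{}{\elrule{\OR}}{D}{
        \begin{matrix}[C]\\ \mathcal{D}_{0}\\ A\vee D\end{matrix}
    }{
        \begin{matrix}[A]\\ \mathcal{D}_{2}\\ B\\ \mathcal{D}_{1}\\ D\end{matrix}
    }{
        \begin{matrix}[D]\\ \\ D\end{matrix}
    }
}
$$

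The main obstacle, exactly as in Theorem~\ref{normm}, is checking the side conditions of the rules used in the replacement. I have to verify that at the moment $\rightarrow I$ is applied, $C$ really is the only open assumption — this follows because in the original derivation the applicability of $\rightarrow_{N_{2}}\!I$ guaranteed that $C$ and $B$ were the sole open assumptions of $\mathcal{D}_{0}$ and $\mathcal{D}_{1}$, while $(\ddagger)$ ensured $\mathcal{D}_{2}$ had only $A$ open, and both $[A]$ and $[B]$ are discharged by the $\vee E$ step and by composing with $\mathcal{D}_{1}$ respectively. Once this bookkeeping is done, the new derivation produces the same conclusion $C\rightarrow D$ from the same $\Gamma$, and the cut formula $(A\rightarrow B)\rightarrow (C\rightarrow D)$ of rank $d$ has been removed (the only cuts introduced lie inside $\mathcal{D}_{2}$, hence of rank $<d$).

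To finish, I would, exactly as in Theorems~\ref{normal} and~\ref{normm}, induct on the pair (cut rank $d$, total length $l$ of segments containing a rank-$d$ cut formula), always working on the rightmost such cut; every reduction step either strictly decreases $l$ or strictly decreases $d$, so iteration terminates in a normal derivation of $\Gamma\vdash A$. This gives the required effective procedure.
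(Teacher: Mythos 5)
Your proposal is correct and is essentially the paper's own argument: the paper proves this case by direct analogy with Theorem~\ref{normm}, and your reduction (replacing the $\rightarrow_{N_{2}}\!I$/$\elrule{\IMP}$ cut by an $\intrule{\IMP}$ over a $\elrule{\OR}$ on $A\vee D$, with the $[A]$-branch given by $\mathcal{D}_{2}$ composed with $\mathcal{D}_{1}$ and a trivial $[D]$-branch) is exactly the two-premise specialization of that reduction. Your additional bookkeeping on the side conditions, checking that $C$ is the sole remaining open assumption before the final $\intrule{\IMP}$, matches what the paper's argument implicitly relies on.
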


\begin{proof}
The proof is similar to the proof of Theorem \ref{normm}.
\end{proof}

\begin{thm}\label{normmm}
There is an effective procedure for transforming a natural deduction derivation of $ \Gamma\vdash A $ in subintuitionistic logic ${\sf WF\hat{C}}$ into a normal one also showing  $ \Gamma\vdash A $.
\end{thm}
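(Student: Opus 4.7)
The plan is to follow exactly the pattern established in Theorems \ref{normal} and \ref{normm}: fix a derivation with cut rank $d$, rearrange it so that major premises of elimination rules appear to the left of minor ones, and then reduce the rightmost cut formula of rank $d$. Every reduction already used for $\wf$ in Theorem \ref{normal} transfers unchanged to ${\sf WF\hat{C}}$, since the calculus differs from that of $\wf$ only by the addition of the single new introduction rule $\rightarrow_{\hat{\wedge}}\!I$. So by Theorem \ref{normal} it is enough to treat one new redex: the case where $\rightarrow_{\hat{\wedge}}\!I$ introduces an implication that is immediately eliminated by $\elrule{\IMP}$.

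The situation is as follows. Some subderivation $\mathcal{D}_{0}$ derives $B$ from the single assumption $[A]$; $\rightarrow_{\hat{\wedge}}\!I$ then produces $(C\rightarrow A)\rightarrow (C\rightarrow B)$, which is used as the major premise of $\elrule{\IMP}$ against a minor premise $C\rightarrow A$ obtained via $\intrule{\IMP}$ from a subderivation $\mathcal{D}_{1}$ of $A$ depending only on $[C]$. The local reduction I propose is to substitute $\mathcal{D}_{1}$ for the discharged hypothesis $[A]$ inside $\mathcal{D}_{0}$, obtaining a derivation of $B$ whose only open assumption is $[C]$, and then to close $[C]$ by a single application of $\intrule{\IMP}$ to conclude $C\rightarrow B$ directly. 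This removes the cut of rank $d$ and leaves only cuts inherited from $\mathcal{D}_{0}$ and $\mathcal{D}_{1}$, whose ranks are strictly less than $d$. Iterating as in Theorem \ref{normal} terminates with a normal derivation.

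The main obstacle, and the only delicate step, is verifying that the side conditions of the rules used in the reduct remain satisfied. The original $\rightarrow_{\hat{\wedge}}\!I$ fires only when $[A]$ is the sole assumption of $\mathcal{D}_{0}$, and the $\intrule{\IMP}$ producing $C\rightarrow A$ fires only when $[C]$ is the sole assumption of $\mathcal{D}_{1}$. After substitution, the composite derivation of $B$ thus has $[C]$ as its unique open assumption, so the final application of $\intrule{\IMP}$ in the reduct meets its own side condition $(\star)$; moreover no other rule occurrence within $\mathcal{D}_{0}$ or $\mathcal{D}_{1}$ has its set of open assumptions altered by the substitution, so nothing else breaks. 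Once this is checked, the termination and rank-decrease arguments of Theorem \ref{normal} apply verbatim and give the claim.
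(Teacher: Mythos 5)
Your reduction is exactly the one the paper uses: you substitute the derivation $\mathcal{D}_{1}$ of $A$ from $[C]$ for the discharged hypothesis $[A]$ in $\mathcal{D}_{0}$ and then close $[C]$ by $\intrule{\IMP}$ to obtain $C\rightarrow B$ directly, with all other cases delegated to the normalization theorem for \wf. Your explicit verification of the side conditions $(\star)$ and the uniqueness of open assumptions is a useful addition, but the argument is essentially identical to the paper's proof.
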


\begin{proof}
By the proof of Theorem \ref{normal}, we only need to consider the case where we introduce an implication by $\rightarrow_{\hat{\wedge}}\!I$ which is then eliminated by $ \elrule{\IMP} $. In this case we are looking at a situation like this:

$$\vlderivation {
\vliin{}{\rightarrow E}{C\rightarrow B}{
\vlhy {\vlderivation {
\vlin{}{\rightarrow_{\hat{\wedge}}\!I}{(C\rightarrow A)\rightarrow (C\rightarrow B)}{
\vlhy {{
			\begin{matrix}
				[A]\\
			\mathcal{D}_{0} \\
				B
			\end{matrix}
			}}}}}}
{
\vlhy {\vlderivation {
\vlin{}{\rightarrow I}{C\rightarrow A}{
\vlhy {{
			\begin{matrix}
				[C]\\
			\mathcal{D}_{1} \\
				A
			\end{matrix}
			}}}}}}} $$
where the cut rank of $  \mathcal{D}_{1}  $ is strictly smaller than $ d $. So, we replace this by:
$$\vlderivation {
\vlin{}{\rightarrow I}{C\rightarrow B}{
\vlhy {{
			\begin{matrix}
				[C]\\
			\mathcal{D}_{1} \\
				A\\
				\mathcal{D}_{0} \\
				B
			\end{matrix}
			}}}} $$
In this case we reduce the number of cut formulas with rank $ d $. Hence, we conclude that after repeated application of these steps we end up with a normal derivation.
\end{proof}
\begin{thm}
There is an effective procedure for transforming a natural deduction derivation of $ \Gamma\vdash A $ in subintuitionistic logic ${\sf WF\hat{D}}$ into a normal one also showing  $ \Gamma\vdash A $.
\end{thm}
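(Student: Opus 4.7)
The plan is to reuse the strategy of Theorem~\ref{normmm} verbatim for everything except the new introduction rule $\rightarrow_{\hat{\vee}}\!I$. All cases inherited from \wf\ are reduced exactly as in Theorem~\ref{normal}, so the entire argument boils down to specifying one new reduction: the configuration in which an implication of shape $(B\IMP C)\IMP(A\IMP C)$ is introduced by $\rightarrow_{\hat{\vee}}\!I$ and immediately serves as the major premise of an $\elrule{\IMP}$ elimination. I would first recall the setup: a derivation with cut rank $d$ where all major premises of elimination rules are placed to the left of the corresponding minor premises, and $l$ is the total segment length over cut formulas of rank~$d$. I pick the rightmost cut of rank~$d$, and the only case not already handled is the one involving $\rightarrow_{\hat{\vee}}\!I$.

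Concretely, the offending cut has the form
$$\vliinf{}{\elrule{\IMP}}{A\IMP C}{
\begin{matrix}
~\\
\vlinf{}{\rightarrow_{\hat{\vee}}\!I}{(B\IMP C)\IMP(A\IMP C)}{
\begin{matrix}
[A]\\ \mathcal{D}_{0}\\ B
\end{matrix}}
\end{matrix}}{
\begin{matrix}
~\\ \mathcal{D}_{1}\\ B\IMP C
\end{matrix}}$$
where $A$ is the only assumption of $\mathcal{D}_{0}$ and $\mathcal{D}_{1}$ has no open assumptions (by the side condition $(\ddagger)$ on $\elrule{\IMP}$), and where the cut rank of $\mathcal{D}_{1}$ is strictly less than $d$. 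I replace this block by
$$\vlinf{}{\intrule{\IMP}}{A\IMP C}{
\begin{matrix}
[A]\\ \mathcal{D}_{0}\\ B\\ \mathcal{D}_{1}\\ C
\end{matrix}}$$
where the occurrence of $C$ is obtained by $\elrule{\IMP}$ from $B$ and $B\IMP C$. The side conditions are preserved: $A$ remains the only open assumption on which $\intrule{\IMP}$ is applied (since $\mathcal{D}_{1}$ is closed), and $\mathcal{D}_{1}$ is still a closed derivation of $B\IMP C$ so the internal $\elrule{\IMP}$ is legal under $(\ddagger)$.

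As in Theorem~\ref{normmm}, this conversion strictly decreases the measure: the cut formula $(B\IMP C)\IMP(A\IMP C)$ of rank~$d$ disappears, the new cut on $B\IMP C$ has rank strictly smaller than $d$, and all other cuts of rank~$d$ are preserved or pushed to the right. Iterating the standard \wf\ reductions together with this new one therefore terminates in a normal derivation of $\Gamma\vdash A$. The main obstacle to watch out for is the discharge bookkeeping: one must verify that the transformed derivation does not accidentally introduce additional open assumptions into $\mathcal{D}_{1}$, which is why the side condition $(\ddagger)$ on $\elrule{\IMP}$ (no open assumptions above $B\IMP C$) is essential for the replacement to remain a legal \wf$\hat{D}$\ derivation.
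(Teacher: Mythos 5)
Your overall strategy---inherit every \wf case from Theorem~\ref{normal} and add a single reduction for a $\rightarrow_{\hat{\vee}}\!I$ introduction immediately consumed by $\elrule{\IMP}$---is exactly the paper's (its proof just says ``similar to Theorem~\ref{normmm}''). But your handling of that one new case has a genuine gap. You claim that the subderivation $\mathcal{D}_1$ of the \emph{minor} premise $B\IMP C$ has no open assumptions ``by the side condition $(\ddagger)$.'' That is a misreading: $(\ddagger)$ constrains the subderivation of the \emph{major} premise of $\elrule{\IMP}$ (here $(B\IMP C)\IMP(A\IMP C)$, which is closed anyway since $\rightarrow_{\hat{\vee}}\!I$ discharges its only assumption $A$), and says nothing about the minor premise. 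The minor premise may perfectly well carry open assumptions---for instance $B\IMP C$ could itself be an undischarged hypothesis in $\Gamma$. In that situation your replacement is not a legal derivation: the new internal $\elrule{\IMP}$ with major premise $B\IMP C$ violates $(\ddagger)$, and the final $\intrule{\IMP}$ violates $(\star)$ because $A$ is then not the only open assumption above $C$.

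The paper's intended reduction (transposed from Theorem~\ref{normmm}) sidesteps this by taking the cut configuration in which the minor premise $B\IMP C$ is itself the conclusion of $\intrule{\IMP}$ from a derivation $\mathcal{D}_1$ of $C$ whose only assumption is $B$; the side condition $(\star)$ on that $\intrule{\IMP}$ supplies precisely the closedness you need. The replacement is then a direct graft---$[A]$, $\mathcal{D}_0$, $B$, $\mathcal{D}_1$, $C$, followed by $\intrule{\IMP}$ discharging $A$---which introduces no new implication elimination at all. Your version, even when $\mathcal{D}_1$ happens to be closed, reinserts an $\elrule{\IMP}$ on $B\IMP C$ and hence possibly a new (lower-rank) cut; that is still compatible with the termination measure, but the closedness of $\mathcal{D}_1$ must come from the shape of the cut configuration, not from $(\ddagger)$. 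To repair the argument, either restrict to the configuration where the minor premise ends in $\intrule{\IMP}$, as the paper does, or give a separate justification for why the minor premise's subderivation may be assumed closed.
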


\begin{proof}
The proof is similar to the proof of Theorem \ref{normmm}.
\end{proof}

\begin{thm}
There is an effective procedure for transforming a natural deduction derivation of $ \Gamma\vdash A $ in subintuitionistic logic ${\sf WFC}$ into a normal one also showing  $ \Gamma\vdash A $.
\end{thm}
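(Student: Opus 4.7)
The plan is to mirror the structure of the proofs of Theorems~\ref{normal}, \ref{normm}, and \ref{normmm}: start from an arbitrary derivation with cut rank $d$, pick the rightmost cut formula of rank $d$ lying on a maximal segment, and perform a local reduction that either lowers the cut rank or decreases the sum of the lengths of segments containing cuts of rank $d$. All of the cases already handled in Theorem~\ref{normal} (the $\wedge$, $\vee$, $\bot$ cases, plus the case where an implication introduced by $\intrule{\IMP}$ is eliminated by $\elrule{\IMP}$, and the case where it is introduced by $\intrule{\IMP}_1$ or $\intrule{\IMP}_2$) can be invoked verbatim, so the only novelty is the new introduction rule $\rightarrow_{\wedge}\!I$ of ${\sf WFC}$.

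Hence I would focus exclusively on the configuration in which the cut formula $A\rightarrow B\wedge C$ is introduced by $\rightarrow_{\wedge}\!I$ from subderivations $\mathcal{D}_0$ of $A\rightarrow B$ and $\mathcal{D}_1$ of $A\rightarrow C$, and is then eliminated by $\elrule{\IMP}$ with minor premise derivation $\mathcal{D}_2$ of $A$. Because the side condition on $\elrule{\IMP}$ forces the derivation of $A\rightarrow B\wedge C$ to have no uncancelled assumptions, both $\mathcal{D}_0$ and $\mathcal{D}_1$ are themselves closed. The reduction I would perform replaces the original piece by
\[
\vliinf{}{\intrule{\AND}}{B\wedge C}{
\vliinf{}{\elrule{\IMP}}{B}{\begin{matrix}\mathcal{D}_2\\ A\end{matrix}}{\begin{matrix}\mathcal{D}_0\\ A\rightarrow B\end{matrix}}
}{
\vliinf{}{\elrule{\IMP}}{C}{\begin{matrix}\mathcal{D}_2\\ A\end{matrix}}{\begin{matrix}\mathcal{D}_1\\ A\rightarrow C\end{matrix}}
}
\]
so that the eliminated major premises $A\rightarrow B$ and $A\rightarrow C$ have strictly smaller rank than $A\rightarrow B\wedge C$, and the $\wedge I$ at the bottom is not a cut since its conclusion is not a major premise of any elimination rule at this position. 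By the induction hypothesis on $\mathcal{D}_0$, $\mathcal{D}_1$, and $\mathcal{D}_2$, each of them has cut rank strictly less than $d$, so the only newly created cuts (the two applications of $\elrule{\IMP}$) are of rank less than $d$.

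The main subtlety I would want to check carefully is that this replacement is legitimate with respect to the side condition $(\ddagger)$ on $\elrule{\IMP}$ and that duplicating $\mathcal{D}_2$ does not introduce fresh open assumptions violating the hypotheses on the reduction step. The first point holds because $\mathcal{D}_0$ and $\mathcal{D}_1$ were already assumption-free in the original derivation, so the new $\elrule{\IMP}$ applications satisfy $(\ddagger)$; the second point holds because the open assumptions of $\mathcal{D}_2$ are exactly the ones contributed by the minor premise in the original derivation, and duplication only repeats them. Standard permutation arguments (identical to those in Theorem~\ref{normal}) then ensure that segments through $\elrule{\OR}$ premises are treated uniformly, and the usual double induction on the cut rank $d$ and the length $\ell$ yields the effective procedure, producing after finitely many steps a normal derivation of $\Gamma\vdash A$ in ${\sf WFC}$.
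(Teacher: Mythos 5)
Your proposal is essentially identical to the paper's proof: the same local reduction replacing the $\rightarrow_{\wedge}\!I$/$\elrule{\IMP}$ detour by an application of $\intrule{\AND}$ to two $\elrule{\IMP}$ inferences combining $\mathcal{D}_2$ with $\mathcal{D}_0$ and $\mathcal{D}_1$, with the same appeal to the side condition $(\ddagger)$ to see that $\mathcal{D}_0$ and $\mathcal{D}_1$ are closed. The only discrepancy is your claim that $A\rightarrow B$ and $A\rightarrow C$ have strictly smaller rank than $A\rightarrow B\wedge C$, which fails for the paper's depth measure when $dp(A)\geq dp(B\wedge C)$; but the paper's own justification at this step (``we reduce the number of cut formulas with rank $d$'') is no more detailed, so this does not set your argument apart from theirs.
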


\begin{proof}
By the proof of Theorem \ref{normal}, we only need to consider the case where we introduce a conjunction by $\rightarrow_{\wedge}\!I$ which is then eliminated by $ \elrule{\IMP} $. In this case we are looking at a situation like this:
$$\vlderivation {
\vliin{}{\rightarrow E}{B\wedge C}{
\vlhy {\vlderivation {
\vliin{}{\rightarrow_{\wedge}\!I}{A\rightarrow B\wedge C}{
\vlhy {{
			\begin{matrix}	
					\mathcal{D}_{0} \\
				A\rightarrow B
			\end{matrix}
			}}}
{
\vlhy {{
			\begin{matrix}	
					\mathcal{D}_{1} \\
			A\rightarrow C
			\end{matrix}
			}}}}}}
{
\vlhy {{
				\begin{matrix}
	\\
					 \mathcal{D}_{2}  \\
					A
				\end{matrix} 
			}}}} $$
where the cut rank of $  \mathcal{D}_{2}  $ is strictly smaller than $ d $. 
Since the rule of $ \rightarrow E $ is applied in this derivation, we conclude, according to the natural deduction for \wf , that all assumptions in the derivations $  	\mathcal{D}_{0}$ and $\mathcal{D}_{1}  $ discharged.
So, we replace this by:

$$\vlderivation {
\vliin{}{\wedge I}{B\wedge C}{
\vlhy {\vlderivation {
\vliin{}{\rightarrow E}{B}{
\vlhy {{
				\begin{matrix}
	\\
					 \mathcal{D}_{2}  \\
					A
				\end{matrix} 
			}}}
{
\vlhy {{
			\begin{matrix}	
	\\
					\mathcal{D}_{0} \\
				A\rightarrow B
			\end{matrix}
			}}}}}}
{
\vlhy {\vlderivation {
\vliin{}{\rightarrow E}{C}{
\vlhy {{
				\begin{matrix}
	\\
					 \mathcal{D}_{2}  \\
					A
				\end{matrix} 
			}}}
{
\vlhy {{
			\begin{matrix}	
	\\
					\mathcal{D}_{1} \\
				A\rightarrow C
			\end{matrix}
			}}}}}}} $$
In this case we reduce the number of cut formulas with rank $ d $. Hence, we conclude that after repeated application of these steps we end up with a normal derivation.
\end{proof}

\begin{thm}
There is an effective procedure for transforming a natural deduction derivation of $ \Gamma\vdash A $ in subintuitionistic logic ${\sf WFD}$ into a normal one also showing  $ \Gamma\vdash A $.
\end{thm}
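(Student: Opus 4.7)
The plan is to follow the template of the preceding normalization arguments, in particular the one for ${\sf WFC}$. By the proof of Theorem~\ref{normal}, all the cases handled for \wf{} carry over unchanged, so the only additional case to consider is the one introduced by the new rule $\rightarrow_{\vee}\!I$: namely, an implication $A\vee B\rightarrow C$ is inferred by $\rightarrow_{\vee}\!I$ from derivations $\mathcal{D}_{0}\vdash A\rightarrow C$ and $\mathcal{D}_{1}\vdash B\rightarrow C$, and then this conclusion is taken as the major premise of $\elrule{\IMP}$ together with a derivation $\mathcal{D}_{2}$ of $A\vee B$, yielding $C$.

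First, observe the side condition: since $\elrule{\IMP}$ requires that its major premise derivation have no open assumptions, and here the major premise is concluded by $\rightarrow_{\vee}\!I$ from $\mathcal{D}_{0}$ and $\mathcal{D}_{1}$, all assumptions in $\mathcal{D}_{0}$ and $\mathcal{D}_{1}$ must already be discharged. This is exactly the observation used in the ${\sf WFC}$ case, and it guarantees that the reduct we construct is well-formed.

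Next, I would replace the cut by a derivation that uses disjunction elimination on $\mathcal{D}_{2}$: the major premise is $\mathcal{D}_{2}\vdash A\vee B$, and the two minor premises are obtained by applying $\elrule{\IMP}$ to the assumption $[A]$ together with $\mathcal{D}_{0}\vdash A\rightarrow C$, respectively to $[B]$ together with $\mathcal{D}_{1}\vdash B\rightarrow C$, in each case producing $C$. Both new $\elrule{\IMP}$ applications satisfy the side condition because $\mathcal{D}_{0}$ and $\mathcal{D}_{1}$ have no open assumptions, as observed above. The resulting derivation still concludes $C$ from the original open hypotheses, and all cuts of rank $d$ that were present in $\mathcal{D}_{0},\mathcal{D}_{1},\mathcal{D}_{2}$ remain of rank strictly less than $d$, so the overall number of cut formulas of rank $d$ has strictly decreased.

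The main point to check, and the only real obstacle, is precisely the side condition just discussed: had $\mathcal{D}_{0}$ or $\mathcal{D}_{1}$ been allowed open assumptions, the replacement $\elrule{\IMP}$ steps would be illegal and the procedure would fail. Since this is ruled out by the restriction on $\elrule{\IMP}$, repeated application of this reduction together with the reductions of Theorem~\ref{normal} strictly decreases the measure $(d,l)$ used there, and the process terminates in a normal derivation of $\Gamma\vdash A$.
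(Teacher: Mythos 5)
Your reduction is sound, but it is not the one the paper uses, and the difference is substantive. The paper's reduct for the $\rightarrow_{\vee}\!I$/$\elrule{\IMP}$ detour assumes in addition that the minor premise $A_{1}\vee A_{2}$ of $\elrule{\IMP}$ is itself the conclusion of $\vee I$ applied to some $\mathcal{D}\vdash A_{i}$; it then discards the disjunction entirely and replaces the whole configuration by $\elrule{\IMP}$ applied to $\mathcal{D}\vdash A_{i}$ and $\mathcal{D}_{i}\vdash A_{i}\rightarrow C$. You instead leave the derivation of $A\vee B$ untouched and permute the two implications through a new $\vee E$ whose minor premises are $\elrule{\IMP}$ applied to $[A]$, $\mathcal{D}_{0}$ and to $[B]$, $\mathcal{D}_{1}$. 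Your version is the more general one: it applies no matter how $A\vee B$ was obtained (an open assumption, the conclusion of an elimination, etc.), whereas the paper's reduct literally covers only the sub-case where the disjunction premise ends in $\vee I$ --- and since the cut on $A\vee B\rightarrow C$ is defined by the major premise alone, the remaining configurations are left untreated there, so your case analysis actually closes a small gap. What the paper's version buys is that it introduces no new $\vee E$, hence no new segments and no possible new cut on $A\vee B$, which keeps the $(d,l)$ bookkeeping cleaner. Your treatment of the side condition on $\elrule{\IMP}$ (that $\mathcal{D}_{0},\mathcal{D}_{1}$ are closed because the original $\elrule{\IMP}$ forces its major-premise subderivation to be closed) is exactly the observation the paper makes for ${\sf WFC}$ and ${\sf WFD}$. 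One caveat, shared equally by the paper: your claim that all newly created cuts have rank strictly below $d$ is not immediate for the new $\elrule{\IMP}$ applications on $A\rightarrow C$ and $B\rightarrow C$, since $rk(A\rightarrow C)$ can equal $rk(A\vee B\rightarrow C)$ when $dp(C)\ge dp(A\vee B)$; both your argument and the paper's rely on the informal ``rightmost cut'' convention to gloss over this.
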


\begin{proof}
By the proof of Theorem \ref{normal}, we only need to consider the case where we introduce a disjunction by $\rightarrow_{\vee}\!I$ which is then eliminated by $ \elrule{\IMP} $. In this case we are looking at a situation like this ($ i=1, 2 $):
$$\vlderivation {
\vliin{}{\rightarrow E}{ C}{
\vlhy {\vlderivation {
\vliin{}{\rightarrow_{\vee}\!I}{A_{1}\vee A_{2}\rightarrow C}{
\vlhy {{
			\begin{matrix}	
					\mathcal{D}_{1} \\
				A_{1}\rightarrow C
			\end{matrix}
			}}}
{
\vlhy {{
			\begin{matrix}	
					\mathcal{D}_{2} \\
			A_{2}\rightarrow C
			\end{matrix}
			}}}}}}
{
\vlhy {\vlderivation {
\vlin{}{\vee I}{A_{1}\vee A_{2}}{
\vlhy {{
			\begin{matrix}	
					\mathcal{D} \\
				A_{i}
			\end{matrix}
			}}}}}}} $$
where the cut rank of $  \mathcal{D}$ is strictly smaller than $ d $. 
Since the rule of $ \rightarrow E $ is applied in this derivation, we conclude, according to the natural deduction for \wf , that all assumptions in the derivations $  	\mathcal{D}_{1}$ and $\mathcal{D}_{2}  $ discharged.
So, we replace this by:
$$\vlderivation {
\vliin{}{\rightarrow E}{C}{
\vlhy {{
			\begin{matrix}	
					\mathcal{D} \\
				A_{i}
			\end{matrix}
			}}}
{
\vlhy {{
			\begin{matrix}	
					\mathcal{D}_{i} \\
				A_{i}\rightarrow C
			\end{matrix}
			}}}}  $$
			In this case we reduce the number of cut formulas with rank $ d $. Hence, we conclude that after repeated application of these steps we end up with a normal derivation.
\end{proof}

\begin{thm}
There is an effective procedure for transforming a natural deduction derivation of $ \Gamma\vdash A $ in subintuitionistic logic ${\sf WFI}$ into a normal one also showing  $ \Gamma\vdash A $.
\end{thm}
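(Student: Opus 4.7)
My plan is to follow the pattern of Theorems~\ref{normal}, \ref{normm}, and \ref{normmm}: reuse the normalization procedure for \wf and add a single new reduction step that handles the one new introduction rule of ${\sf WFI}$, namely $\rightarrow_{tr}\!I$. The argument will again proceed by induction on the pair $(d,l)$, where $d$ is the cut rank of the derivation and $l$ is the sum of the lengths of segments containing a rightmost cut formula of rank $d$, and will target the rightmost cut formula of maximal rank in a derivation where major premises of elimination rules are arranged to the left of minor ones.

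By the proof of Theorem~\ref{normal}, the only case not already covered is the one in which the implication introduced by $\rightarrow_{tr}\!I$ is immediately the major premise of $\elrule{\IMP}$. That is, we face a configuration
$$\vliinf{}{\elrule{\IMP}}{C}{
\begin{matrix}~\\ \mathcal{D}_0 \\ A\end{matrix}
}{
\begin{matrix}~\\
\vliinf{}{\rightarrow_{tr}\!I}{A\IMP C}{
\begin{matrix}\mathcal{D}_1 \\ A\IMP B\end{matrix}
}{
\begin{matrix}\mathcal{D}_2 \\ B\IMP C\end{matrix}
}
\end{matrix}
}$$
where the cut rank of $\mathcal{D}_0$ is strictly smaller than $d$. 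The natural reduction is to replace this subderivation by
$$\vliinf{}{\elrule{\IMP}}{C}{
\begin{matrix}~\\
\vliinf{}{\elrule{\IMP}}{B}{
\begin{matrix}\mathcal{D}_0 \\ A\end{matrix}
}{
\begin{matrix}\mathcal{D}_1 \\ A\IMP B\end{matrix}
}
\end{matrix}
}{
\begin{matrix}\mathcal{D}_2 \\ B\IMP C\end{matrix}
}$$
so that the cut formula $A\IMP C$ (of rank $d$) disappears and is replaced by a single use of $\elrule{\IMP}$ on $B$, whose rank is strictly less than that of $A\IMP C$.

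The main obstacle, and the point one must check carefully, is the side condition $(\ddagger)$ on $\elrule{\IMP}$: the subderivations whose roots are the major premises must have no uncancelled assumptions. Since the original derivation applies $\elrule{\IMP}$ to $A\IMP C$, and $\rightarrow_{tr}\!I$ is what produces $A\IMP C$, both $\mathcal{D}_1$ (with root $A\IMP B$) and $\mathcal{D}_2$ (with root $B\IMP C$) must already be assumption-free in the original derivation, so the new application of $\elrule{\IMP}$ with major premise $A\IMP B$ and the re-use of $B\IMP C$ as major premise of the outer $\elrule{\IMP}$ are both legitimate. With this observation the reduction is well defined, it strictly decreases the measure used in Theorem~\ref{normal}, and iterating it together with the reductions already established for \wf yields a normal derivation of $\Gamma\vdash A$.
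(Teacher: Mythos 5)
Your reduction is exactly the one the paper uses: the $\rightarrow_{tr}\!I$ followed by $\elrule{\IMP}$ detour is replaced by two successive applications of $\elrule{\IMP}$ through $B$, and your check of the side condition $(\ddagger)$ (that the subderivations of $A\IMP B$ and $B\IMP C$ must already be assumption-free because the original $\elrule{\IMP}$ on $A\IMP C$ was legitimate) matches the paper's own justification. The proposal is correct and takes essentially the same approach as the paper.
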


\begin{proof}
By the proof of Theorem \ref{normal}, we only need to consider the case where we introduce an implication by $\rightarrow_{tr}\!I$ which is then eliminated by $ \elrule{\IMP} $. In this case we are looking at a situation like this
$$\vlderivation {
\vliin{}{\rightarrow E}{ C}{
\vlhy {\vlderivation {
\vliin{}{\rightarrow_{tr}\!I}{A\rightarrow C}{
\vlhy {{
			\begin{matrix}	
					\mathcal{D}_{0} \\
				A\rightarrow B
			\end{matrix}
			}}}
{
\vlhy {{
			\begin{matrix}	
					\mathcal{D}_{1} \\
			B\rightarrow C
			\end{matrix}
			}}}}}}
{
\vlhy {{
				\begin{matrix}
	\\
					 \mathcal{D}_{2}  \\
					A
				\end{matrix} 
			}}}} $$
where the cut rank of $  \mathcal{D}_{2}  $ is strictly smaller than $ d $. 
Since the rule of $ \rightarrow E $ is applied in this derivation, we conclude, according to the natural deduction system for \wf , that all assumptions in the derivations $  	\mathcal{D}_{0}$ and $\mathcal{D}_{1}  $ discharged.
So, we replace this by:

$$\vlderivation {
\vliin{}{\rightarrow E}{ C}{
\vlhy {\vlderivation {
\vliin{}{\rightarrow E}{B}{
\vlhy {{
			\begin{matrix}	
					\mathcal{D}_{2} \\
				A
			\end{matrix}
			}}}
{
\vlhy {{
			\begin{matrix}	
					\mathcal{D}_{0} \\
			A\rightarrow B
			\end{matrix}
			}}}}}}
{
\vlhy {{
				\begin{matrix}
	\\
					 \mathcal{D}_{1}  \\
					B\rightarrow C
				\end{matrix} 
			}}}} $$
			
			In this case we reduce the number of cut formulas with rank $ d $. Hence, we conclude that after repeated application of these steps we end up with a normal derivation.
\end{proof}

\begin{thm}
There is an effective procedure for transforming a natural deduction derivation of $ \Gamma\vdash A $ in subintuitionistic logic ${\sf F}$ into a normal one also showing  $ \Gamma\vdash A $.
\end{thm}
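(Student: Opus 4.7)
The plan is to mimic the strategy used in the previous normalization theorems, namely, to measure the complexity of a derivation by its cut rank $d$ together with the sum $l$ of the lengths of the segments carrying a cut formula of rank $d$, and then to argue that one can always rewrite the rightmost such cut so as to strictly decrease the pair $(d,l)$ in the lexicographic order. Since the natural deduction system for ${\sf F}$ is obtained from that of ${\sf WFI}$ simply by adding the two rules $\rightarrow_{\wedge}\!I$ and $\rightarrow_{\vee}\!I$, the permutative reductions for segments and all reductions inherited from intuitionistic logic carry over verbatim, and the reductions for the rule $\rightarrow_{tr}\!I$ followed by $\rightarrow\!E$ are already treated in the proof of the ${\sf WFI}$ normalization theorem.

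Next I would explicitly describe the two remaining critical cases: a cut in which the major premise $A\rightarrow B\wedge C$ is produced by $\rightarrow_{\wedge}\!I$ and then eliminated by $\rightarrow\!E$, and a cut in which the major premise $A_1\vee A_2\rightarrow C$ is produced by $\rightarrow_{\vee}\!I$ and then eliminated by $\rightarrow\!E$. These are precisely the reductions set up in the normalization proofs for ${\sf WFC}$ and ${\sf WFD}$, respectively, and as in those proofs the side-condition of $\rightarrow\!E$ guarantees that all assumptions in the subderivations ending in the two premises of $\rightarrow_{\wedge}\!I$ (resp.\ $\rightarrow_{\vee}\!I$) are already discharged, so the replacement derivations are well-formed in ${\sf F}$. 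The resulting derivations have strictly fewer cut formulas of rank $d$, because the cut formula $A\rightarrow B\wedge C$ (resp.\ $A_1\vee A_2\rightarrow C$) is dissolved while the newly created cuts involve $A\rightarrow B$, $A\rightarrow C$, or $A_i\rightarrow C$, all of which have strictly smaller rank than the original cut formula.

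I would then combine these with the $\rightarrow_{tr}\!I/\rightarrow\!E$ reduction from the ${\sf WFI}$ theorem and the standard reductions and segment-permutations from Theorem~\ref{normal}. The main obstacle I anticipate is purely bookkeeping: one must verify that choosing the rightmost cut of maximal rank does not interact badly with the new reductions; specifically, one must check that the derivations $\mathcal{D}_0$, $\mathcal{D}_1$, and $\mathcal{D}_2$ that get duplicated or permuted after a reduction introduce only cuts of rank strictly less than $d$. This is immediate because the side-conditions on $\rightarrow_{\wedge}\!I$, $\rightarrow_{\vee}\!I$, $\rightarrow_{tr}\!I$ and $\rightarrow\!E$ force the premises to be closed derivations or to involve strictly smaller formulas, exactly as in the previous theorems.

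Having handled every possible last-rule shape for the major premise of an $\rightarrow\!E$ cut, the termination argument is the usual one: each reduction step either strictly decreases $d$ or keeps $d$ fixed while strictly decreasing $l$, so after finitely many steps no cut of any rank remains, giving a normal derivation of $\Gamma\vdash A$ in ${\sf F}$.
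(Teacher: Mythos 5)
Your proposal is correct and follows essentially the same route as the paper, which proves this theorem by observing that the natural deduction system for ${\sf F}$ combines the rules of ${\sf WFI}$, ${\sf WFC}$ and ${\sf WFD}$, so the normalization argument is obtained by assembling the reduction steps already established for those three systems. Your write-up simply makes explicit the case analysis and termination measure that the paper leaves implicit.
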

\begin{proof}
The proof of this theorem follows directly from the normalization theorems for the logics 
${\sf WFI}$, ${\sf WFC}$ and ${\sf WFD}$.
\end{proof}

\section{Conclusion}
In this paper, we have, for the first time, introduced natural deduction systems for certain weak subintuitionistic logics and provided proofs of the normalization theorem for each system. A key direction for future research is to explore whether the Curry-Howard correspondence for \ipc can be adapted to these subintuitionistic logics.

\vspace{0.5cm}
\noindent\textbf{Acknowledgements.} I thank Dick de Jongh for all  his helpful suggestions concerning subintuitionistic logics. 

\label{references}
\

\end{document}